\documentclass{birkmult}
%
\newtheorem{theorem}{Theorem}[section]
\newtheorem{proposition}[theorem]{Proposition}

\newtheorem{defn}[theorem]{Definition}

\newtheorem*{theorem*}{Theorem}
\theoremstyle{definition}

\newtheorem{example}[theorem]{Example}
\newtheorem{remark}[theorem]{Remark}

\newcommand{\R}{\mathbb{R}}
\newcommand{\C}{\mathbb{C}}
\newcommand{\Z}{\mathbb{Z}}
\newcommand{\N}{\mathbb{N}}
\newcommand{\Q}{\mathbb{Q}}
\newcommand{\PP}{\mathbb{P}}

\newcommand{\M}{\mathcal{M}}
\newcommand{\V}{\mathcal{V}}

\renewcommand\epsilon{\varepsilon}

\begin{document}
%
%
%
%
%
%
\title[Computing Linear Matrix Representations]{Computing Linear Matrix Representations  \\
of Helton-Vinnikov Curves}
\author{Daniel Plaumann}

\address{Fachbereich Mathematik und Statistik, Universit\"at Konstanz,~78457 Konstanz, Germany}

\email{Daniel.Plaumann@uni-konstanz.de}

\thanks{Daniel Plaumann was supported by the Alexander-von-Humboldt 
Foundation through a Feodor Lynen postdoctoral fellowship,
hosted at UC Berkeley.
Bernd Sturmfels and Cynthia Vinzant acknowledge support by
 the U.S.~National Science Foundation
(DMS-0757207 and DMS-0968882).
}
\author{Bernd Sturmfels}
\address{Dept.~of Mathematics, University of
  California, Berkeley, CA 94720, USA}
\email{bernd@math.berkeley.edu}

\author{Cynthia Vinzant}
\address{Dept.~of Mathematics, University of
  Michigan, Ann Arbor, MI 48109, USA}
\email{vinzant@umich.edu}

\subjclass[2010]{Primary: 14Q05;  Secondary: 14K25}

\keywords{Plane curves, symmetric determinantal representations,
spectrahedra, linear matrix inequalities, hyperbolic
  polynomials, theta functions}

\date{January 1, 2004}
\dedicatory{Dedicated to Bill Helton on the occasion of his 65th birthday.}

\begin{abstract}
Helton and Vinnikov showed that every
rigidly convex curve in the real plane bounds
a spectrahedron. This leads to the computational problem of
explicitly producing a symmetric (positive definite) linear determinantal representation
for a given curve.
We study three approaches to this problem:
an algebraic approach via solving polynomial equations,
a geometric approach via contact curves,
and an analytic approach via theta functions.
These are explained, compared,
and tested experimentally for
low degree instances.
\end{abstract}

\maketitle

\section{Introduction}

The Helton-Vinnikov Theorem \cite{HV} gives
a geometric characterization of two-dimensional spectrahedra.
They are precisely the subsets of $\R^2$ that are bounded by
rigidly convex algebraic curves, here called \textit{Helton-Vinnikov curves}.  
These curves are cut out by {\em hyperbolic polynomials} in three variables, as discussed in~\cite{LPR}.
This theorem is a refinement of a result from classical algebraic
geometry which states that every homogeneous polynomial
in three variables can be written~as
\begin{equation}
\label{lmr1}
f(x,y,z) \,\, = \,\, {\rm det}(A x + By + Cz)
\end{equation}
where $A, B$ and $C$ are symmetric matrices.  Here the coefficients of
$f$ and the matrix entries are complex numbers.  When the coefficients
of $f$ are real then it is desirable to find $A,B$ and $C$ with real
entries.  The representations relevant for spectrahedra are the
\emph{real definite representations}, which means that the linear span of the
real matrices $A,B$ and $C$ contain a positive definite matrix.  Such
a representation is possible if and only if the corresponding curve
$\{(x:y:z) \in \PP^2_\R : f(x,y,z) = 0 \}$ is {\em rigidly
  convex}. This condition means that the curve has the maximal number
of nested ovals, namely, there are $d/2$ resp.~$(d-1)/2$ nested ovals
when the degree $d$ of $f$ is even resp.~odd.  The innermost oval
bounds a spectrahedron.

Two linear matrix representations
$Ax+By+Cz$ and $A'x+B'y+C'z$  of the same plane curve are said to be
{\em equivalent} if they lie in the same orbit under conjugation, i.e.~if 
there exists an invertible complex matrix $U$ that satisfies
$$ U \cdot (Ax+By+Cz) \cdot U^T \quad = \quad  A'x+B'y+C'z. $$
We call an equivalence class of complex representations \emph{real}
(resp.~\emph{real definite}) if it contains a real (resp.~real
definite) representative. Deciding whether a given complex
representation is equivalent to a real or real definite one is rather
difficult. 

We shall see that the number of equivalence classes of
complex representations (\ref{lmr1}) is finite,
and, for smooth curves, the precise number is known
(Thm.~\ref{equivcount}). Using more general results
of Vinnikov \cite{Vin}, we also derive the number of real and real
definite equivalence classes.
If a Helton-Vinnikov curve is smooth then  the number
of real definite equivalence classes equals
$2^g$, where $g = \binom{d-1}{2}$ is the genus.
 
This paper concerns the computational problem of constructing one
representative from each equivalence class for a given polynomial $f(x,y,z)$.
 As a warm-up example,
consider the following elliptic curve in Weierstrass normal form:
$$ f(x,y,z) \quad = \quad  (x+ay)(x+by)(x+cy) - xz^2 .$$
Here $a ,b,c$ are distinct non-zero reals.
This cubic has
precisely three inequivalent linear symmetric determinantal representations over $\C$,
given by the matrices
  \[\begin{footnotesize}
  \begin{bmatrix}
    x+ay & z\sqrt {{\frac {b}{b-c}}} & z\sqrt {{ \frac {c}{c-b}}}\\
    z\sqrt {{\frac {b}{b-c}}}&x+by&0 \\ z\sqrt {{\frac
        {c}{c-b}}}&0&x+cy
  \end{bmatrix}\! ,
  \begin{bmatrix}
    x+ay&z\sqrt {{\frac {a}{a-c}}}&0 \\ z\sqrt {{\frac
        {a}{a-c}}}&x+by&z\sqrt {{\frac {c}{c- a}}}\\ 0&z\sqrt {{\frac
        {c}{c-a}}}&x+cy
  \end{bmatrix}\! ,
  \begin{bmatrix}
    x+ay&0&z\sqrt {{\frac {a}{a-b}}} \\ 0&x+by&z\sqrt {{\frac
        {b}{b-a}}}\\ z\sqrt {{\frac {a}{a-b}}}&z\sqrt {{\frac
        {b}{b-a}}}&x+cy
  \end{bmatrix}\end{footnotesize}
  \]
All three matrices are non-real if $a,b$ and $c$ have the
same sign,
and otherwise two of the matrices are real. For instance, if
 $a < 0$ and $0 < b < c$ then the first two matrices are real.
 In that case, 
  the cubic is a Helton-Vinnikov curve, and its bounded region,
when  drawn
  in the affine plane $\{x=1\}$, is the spectrahedron
 $$ 
 \left\{\, (y,z) \in \R^2\,\,:\,\,
  \left[ \begin {array}{ccc} 1+ay&z\sqrt {{\frac {a}{a-c}}}&0
\\\noalign{\medskip}z\sqrt {{\frac {a}{a-c}}}&1+by&z\sqrt {{\frac {c}{c-
a}}}\\\noalign{\medskip}0&z\sqrt {{\frac {c}{c-a}}}&1+cy\end {array}
 \right]  \succeq \,0 \,\right\} .$$
 The symbol ``$\succeq$'' means that the matrix is positive semidefinite.
 This spectrahedron  is depicted in Figure \ref{fig:cubic}
 for the parameter values $ a=-1$, $b=1$ and $c=2$.

\begin{figure}
 \includegraphics[width=6.0cm]{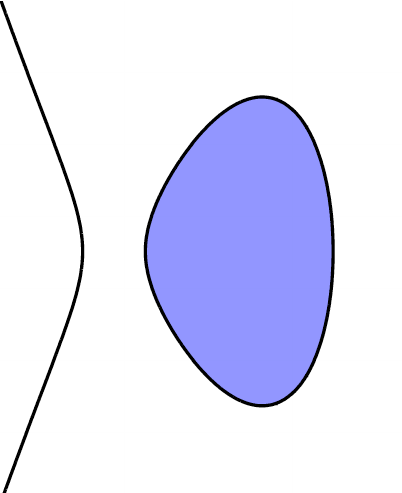} \quad
\vskip -0.2cm
\caption{A cubic Helton-Vinnikov curve and its spectrahedron}
\label{fig:cubic}
\end{figure}

This article is organized as follows. In Section 2 we translate (\ref{lmr1}) into a 
system of polynomial equations in the matrix entries of $A,B,C$, 
we determine the number of solutions (in Theorem \ref{equivcount}),
and we discuss practical aspects of computing these solutions using
both symbolic and numeric software.
Section 3 is devoted to geometric constructions for obtaining
the representation (\ref{lmr1}). Following Dixon \cite{Dix}, these require finding
contact curves of degree $d-1$ for the given curve of degree $d$.

An explicit formula for (\ref{lmr1}) appears in the article of Helton and Vinnikov \cite[Eq.~4.2]{HV}.
That formula requires the numerical evaluation of Abelian integrals and theta functions.
In Section 4, we explain the Helton-Vinnikov formula,
and we report on our computational experience with the implementations 
of  \cite{DeEtAl, DP, DvH}  in the {\tt Maple} package {\tt algcurves}.
In Section 5 we focus on the case of quartic polynomials and relate our
results in \cite{PSV} to the combinatorics of  theta functions.
Smooth quartics have $36$ inequivalent representations (\ref{lmr1}).
In the Helton-Vinnikov case, twelve of these are real, but only 
eight are real definite. One of our findings
is an explicit quartic in $\Q[x,y,z]$ that has all of its $12$ real representations over~$\Q$.

\section{Solving Polynomial Equations}\label{sec:PolynomialEquations}

Our given input is a homogeneous polynomial 
$f(x,y,z)$ of degree $d$, usually over $\Q$.
We assume for simplicity that the corresponding
curve  in the complex projective plane is smooth,
we normalize so that $f(x,0,0) = x^d$, and we further assume that
the factors of the binary form $\,f(x,y,0) = \prod_{i=1}^d (x+ \beta_i y)$ are distinct.
Under these hypotheses, every equivalence class of 
 representations (\ref{lmr1}) contains a representative where
$A$ is the identity matrix and $B$ is the diagonal matrix with entries
$\beta_1<\beta_2<\cdots<\beta_d$. This follows from the linear algebra fact
that any two quadratic forms with distinct eigenvalues can be diagonalized
simultaneously over $\C$.  See Section IX.3 in Greub's text book \cite{Gre} or the proof of Theorem~4.3 
in \cite{PSV}.

After fixing the choices $A = {\rm diag}(1,1,\ldots,1)$ 
and $B = {\rm diag}(\beta_1,\beta_2,\ldots,\beta_d)$ for the first two matrices, we
are left with the problem of finding the $\binom{d+1}{2}$ entries  of the
symmetric matrix $C = (c_{ij})$. By equating the coefficients of all
terms $x^\alpha y^\beta z^\gamma$ with $\gamma \geq 1$ on both sides of (\ref{lmr1}), we
obtain a system of $\binom{d+1}{2}$ polynomial equations in the $\binom{d+1}{2}$
unknowns $c_{ij}$. More precisely, the coefficient
of $x^\alpha y^\beta z^\gamma$ in  (\ref{lmr1})
leads to an equation of degree $\gamma$ in
the $c_{ij}$.
We are thus faced with the problem of solving a square
system of polynomial equations. The expected number
of complex solutions of that system is, according to B\'ezout's Theorem,
\begin{equation}
\label{bezout}
 1^d \cdot 2^{d-1} \cdot 3^{d-2} \cdot 4^{d-3} \cdot \cdots \cdot (d-1)^2  \cdot d . 
 \end{equation}
 This estimate overcounts the number of equivalence classes
 of representations (\ref{lmr1}) because
 we can conjugate the matrix $Ax +By+Cz$ by a diagonal
 matrix whose entries are $+1 $ or $-1$. This conjugation does not change $A$ or $B$
 but it leads to $2^{d-1}$ distinct matrices $C$  all of which are equivalent.
 Hence, we can expect the number
 of inequivalent linear determinantal representations (\ref{lmr1}) to be bounded above by
 \begin{equation}
\label{bezout2}
 3^{d-2} \cdot 4^{d-3} \cdot \cdots \cdot (d-1)^2  \cdot d . 
 \end{equation}
We shall refer to this number as the {\em B\'ezout bound} for our problem.

It is a result in classical algebraic geometry that the number of complex
solutions to our equations is finite, and
the precise number of solutions is in fact known as well.
The following theorem summarizes both what is known for
arbitrary smooth curves over $\C$
and what can be shown for Helton-Vinnikov curves over $\R$:

\begin{theorem} \label{equivcount}
The number of equivalence classes of linear symmetric determinantal representations
{\rm (\ref{lmr1})} of a generic smooth curve of degree $d$ in the projective plane~is 
\begin{equation}
\label{truenumber} 2^{\binom{d-1}{2}-1} \cdot \bigl(\, 2^{\binom{d-1}{2}}+1 \bigr) ,
\end{equation}
unless $d \geq 11$ and $d$ is congruent to $\pm 3$
   modulo $8$, when the number drops by one.
In the case of a Helton-Vinnikov curve, the number of real
equivalence classes of symmetric linear determinantal representations 
{\rm (\ref{lmr1})} is either
$\,
2^{\binom{d-1}{2}-1}(2^{\lceil\frac{d}{2}\rceil-1}+1)\,$
or one less. The number of real definite equivalence classes is precisely $\,
2^{\binom{d-1}{2}}$.
\end{theorem}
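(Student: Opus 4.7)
The plan is to reduce the counting problem to an enumeration of theta characteristics on the curve $C=V(f)\subset\PP^2$. The three counts in the statement will correspond to (i) non-effective complex theta characteristics, (ii) the subset of these fixed by complex conjugation, and (iii) a positivity refinement that singles out the real definite class.

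First I would invoke the classical Dixon--Catanese--Beauville correspondence: equivalence classes of linear symmetric determinantal representations (\ref{lmr1}) of a smooth plane curve $C$ of degree $d$ are in bijection with theta characteristics $\theta$ on $C$ with $h^0(C,\theta)=0$. Here $\theta$ is a line bundle of degree $g-1$ satisfying $\theta^{\otimes 2}\cong K_C$, obtained as the cokernel sheaf of the restriction of $Ax+By+Cz$ to $C$. The complex count therefore becomes an enumeration of non-effective theta characteristics on a generic smooth curve of genus $g=\binom{d-1}{2}$. By Mumford's parity theorem, the $2^{2g}$ theta characteristics split into $2^{g-1}(2^g+1)$ even and $2^{g-1}(2^g-1)$ odd ones, the parity being that of $h^0(\theta)$. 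On a generic curve, the odd theta characteristics are precisely the effective ones (each with $h^0=1$), so the non-effective ones number $2^{g-1}(2^g+1)$, which after substituting $g=\binom{d-1}{2}$ yields (\ref{truenumber}).

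The drop by one for $d\geq 11$ with $d\equiv\pm 3\pmod 8$ comes from the existence of a vanishing theta-null (an effective even theta characteristic) on generic smooth plane curves of such degrees, which subtracts one from the count. I would derive the residue condition by a parity computation on $g-1=d(d-3)/2$ against Mumford's quadratic form $q_\theta$, in the spirit of Beauville's results on vanishing theta-nulls for plane curves.

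For the real counts, I would appeal to Vinnikov's analysis in \cite{Vin}. A Helton-Vinnikov curve realizes the maximal nesting, giving $\lceil d/2\rceil$ real connected components (counting the pseudo-line when $d$ is odd); this topological datum determines the action of complex conjugation on $\mathrm{Pic}^{g-1}(C)[2]$ and hence the count of real theta characteristics. Tallying the real non-effective ones gives the claimed formula $2^{g-1}(2^{\lceil d/2\rceil-1}+1)$, with the possible off-by-one running parallel to the complex drop. For the real definite count, a real symmetric representation is equivalent to a definite one exactly when the associated real theta line bundle is positive in Vinnikov's sense on each oval; such positivity amounts to independent sign choices on the nested ovals, producing exactly $2^g$ classes with no exceptional cases. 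The main obstacle will be this last step: extracting a clean combinatorial count of $2^g$ from Vinnikov's positivity criterion and verifying that the exceptional off-by-one from the complex side does not propagate into the real definite count. The complex count and the upper half of the real count are, by contrast, clean consequences of Mumford's parity theorem combined with the classical bijection.
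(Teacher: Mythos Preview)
Your proposal follows essentially the same route as the paper: the bijection with ineffective even theta characteristics (via Beauville/Dixon), Mumford's parity count giving $2^{g-1}(2^g+1)$, the generic single vanishing theta-null in the exceptional residue classes (the paper simply cites Meyer-Brandis's thesis \cite{MB} for this rather than rederiving it), and Vinnikov's analysis \cite{Vin} for the real and real definite counts.

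One point to correct: your heuristic for the $2^g$ real definite classes as ``independent sign choices on the nested ovals'' cannot be right as stated, since the curve has only $k=\lceil d/2\rceil$ real components, not $g$. The paper's argument (following \cite[Thm.~6.1 and Cor.~4.3]{Vin}) instead fixes a symplectic basis of $H_1(X,\Z)$ in which the Riemann matrix satisfies $\Omega+\overline{\Omega}=H$ for a specific rank-$(g-k+1)$ block matrix $H$; in these coordinates the even characteristic $\delta={\bf a}+\Omega{\bf b}$ yields a real class iff $H{\bf a}\equiv 0$, and a \emph{definite} class iff ${\bf a}=0$. Vinnikov shows that all characteristics with ${\bf a}=0$ are non-vanishing, so the $2^g$ comes from the free choice of ${\bf b}\in\{0,\tfrac12\}^g$, with no exceptional drop. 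You were right to flag this step as the main obstacle; this is how it is resolved.
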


\begin{proof}[Sketch of Proof]
The equivalence classes of representations (\ref{lmr1}) correspond to
{\em ineffective even theta characteristics}
 \cite{Bea} on a smooth curve of
genus $g = \binom{d-1}{2}$. The number of even theta characteristics
is $2^{g-1} ( 2^g+1)$, and all even theta characteristics are ineffective
for $d \leq 5$ and $d \equiv 0,1,2,4,6,7$ mod $8$. In all other cases
there is precisely one effective even theta characteristic, provided the
curve is generic. This was shown by Meyer-Brandis in his 1998 diploma thesis \cite{MB},
and it refines results known classically in algebraic
geometry~\cite[Chapters 4-5]{Dol}.
The count of real and real definite representations will be proved at
the end of Section~\ref{sec:theta}.
\end{proof}

The following table lists the numbers in (\ref{bezout2}) and (\ref{truenumber}) for small values of~$d$:
$$
\begin{matrix}  {\rm degree} \,\, d && 2 & 3 & 4 & 5 & 6 & 7 \\
\hline
{\rm genus} \,\, g  && \,\,0\,\, & \,\,1\,\, & \,\,3 \,\,\, & \,\, 6 \,\, & 10 & 15 \\
\text{B\'ezout}\,{\rm bound} && 1 & 3 & 36 & 2160 & 777600 & 1959552000 \\
{\rm True} \, {\rm number}  && 1 & 3 & 36 & 2080 & 524800  & 536887296 \\
\end{matrix} 
$$
This table shows that computing all solutions to our equations 
is a challenge when $d \geq 6$.
Below we shall discuss some computer
experiments we conducted for $d \leq 5$.

\bigskip

As before, we fix $A$ to be the $d {\times} d$ identity matrix, denoted ${\rm Id}_d$,
and we fix $B$ to be the diagonal matrix
with entries $\beta_1 < \cdots < \beta_d$. We also fix the diagonal entries of $C$
since these are determined by solving the $d$  linear equations that arise
by comparing the coefficient  of any of the $d$ monomials $ x^i y^{d-i-1} z$
in (\ref{lmr1}). They are expressed in terms of $f$ and the $\beta_i$ by the following explicit formula:
\begin{equation}
\label{diagC} \quad
 c_{ii} \,\,\, = \,\,\, \beta_i \cdot \frac{ \frac{\partial f}{\partial z}(-\beta_i,1,0)}{
 \frac{\partial f}{\partial y}(-\beta_i,1,0)} \qquad \quad \hbox{for} \,\, i =1,2,\ldots,d.
 \end{equation}

We are thus left with a system of $\binom{d}{2}$ equations in the
$\binom{d}{2}$ off-diagonal unknowns $c_{ij}$. In order to remove the
extraneous factor of $2^{d-1}$ in the B\'ezout bound (\ref{bezout}) coming from 
sign changes on the rows and columns of $C$, we can
perform a multiplicative change of coordinates as follows:
$\,x_{1j} = c_{1j}^2  \,$ for $j = 1 , 2, \ldots, d \,$ and
$\,x_{ij}  = c_{1i} c_{1j} c_{ij}\,$ for $2 \leq i < j \leq d$.
This translates our system of polynomial equations in
the $c_{ij}$ into a system of Laurent polynomial equations in
the $x_{ij}$, and each solution to the latter encodes an 
equivalence class of $2^{d-1}$ solutions to the former.

\begin{example}
Let $d=4$. We shall illustrate the two distinct formulations of the
system of equations to be solved.
We fix a quartic Helton-Vinnikov polynomial
$$ 
f(x,y,z) \;= \; {\rm det}
\begin{footnotesize}
  \begin{bmatrix}
    x + \beta_1 y + \gamma_{11} z & \gamma_{12} z & \gamma_{13}  z & \gamma_{14} z \\
    \gamma_{12} z & x + \beta_2 y + \gamma_{22} z & \gamma_{23} z & \gamma_{24} z \\
    \gamma_{13} z & \gamma_{23} z & x + \beta_3 y + \gamma_{33} z & \gamma_{34} z \\
    \gamma_{14} z & \gamma_{24} z & \gamma_{34} z & x + \beta_4 y +
    \gamma_{44} z
  \end{bmatrix}
\end{footnotesize}
$$
where $\beta_i$ and $\gamma_{jk}$ are rational numbers.
From the quartic $f(x,y,z)$ alone we can recover the $\beta_i$ and 
the diagonal entries $\gamma_{jj}$ as described above. 
 Our aim is now to compute {\bf all}
points $(c_{12}, c_{13}, c_{14}, c_{23}, c_{24}, c_{34}) \in \C^6$ that satisfy the identity
$$ {\rm det}
\begin{footnotesize}
  \begin{bmatrix}
    x + \beta_1 y + \gamma_{11} z & c_{12} z & c_{13} z & c_{14} z \\
    c_{12} z & x + \beta_2 y + \gamma_{22} z & c_{23} z & c_{24} z \\
    c_{13} z & c_{23} z & x + \beta_3 y + \gamma_{33} z & c_{34} z \\
    c_{14} z & c_{24} z & c_{34} z & x + \beta_4 y + \gamma_{44} z
  \end{bmatrix}
\end{footnotesize}
\; = \; f(x,y,z). $$
The coefficient of $z^4$ gives one equation of degree $4$ in the six unknowns $c_{ij}$,
the coefficients of $xz^3$ and $yz^3$ give two cubic equations,
and the coefficient of $x^2 z^2, xy z^2$ and $y^2z^2$ give three quadratic equations
in the $c_{ij}$.
The number of solutions in $\C^6$ to this system of equations is equal to 
$\,2^3 3^2 4 =  288$.
These solutions can be found using symbolic software,
such as  {\tt Singular} \cite{singular}. However, the above formulation 
has the disadvantage that each equivalence class of solutions
appears eight times.

We note that,  for generic choices of  $\beta_i, \gamma_{jk}$, all solutions 
lie in the torus $(\C^*)^6$ where $\C^* = \C \backslash \{0\}$, 
and we shall now assume that this is the case.
Then the
 $8$-fold redundancy can be removed by working
 with the following invariant coordinates:
 \begin{align*}
  &x_{12} = c_{12}^2\,,\,\, x_{13} = c_{13}^2\,,\,\, x_{14} =
   c_{14}^2\,,\,\,\\ 
   &x_{23} = c_{12} c_{13} c_{23}\,,\,\,
   x_{24} =
   c_{12} c_{14} c_{24}\, , \,\, x_{34} = c_{13} c_{14} c_{34} .
 \end{align*}
We rewrite our six equations in these coordinates by  performing the substitution:
\begin{align*}
  &c_{12} = x_{12}^{1/2}\,,\,\, c_{13} = x_{13}^{1/2}\,,\,\, c_{14} =
  x_{14}^{1/2}\,,\,\, \\
  &c_{23} = \frac{x_{23}}{x_{12}^{1/2}
    x_{13}^{1/2}}\,,\,\, c_{24} = \frac{x_{24}}{x_{12}^{1/2}
    x_{14}^{1/2}}\,,\,\, c_{34} = \frac{x_{34}}{x_{13}^{1/2}
    x_{14}^{1/2}}.
\end{align*}
This gives six Laurent polynomial equations  in
six unknowns $x_{12}$, $x_{13}$, $x_{14}$, $x_{23}$, $x_{24}$, $x_{34}$.
They have precisely $36$ solutions in $(\C^*)^6$,
one for each equivalence class. \qed
\end{example}

While the solution of the above equations using symbolic
Gr\"obner-based software is easy for $d=4$, we found that
this is no longer the case for $d \geq 5$. For $d=5$, it was
necessary to employ tools from numerical algebraic
geometry, and we found that {\tt Bertini} \cite{bertini} works well for 
our purpose.
The computation reported below
is due to Charles Chen, an undergraduate student at UC Berkeley.
This was part of Chen's term project in convex algebraic geometry
 during Fall 2010.

\smallskip

For a concrete example,
let us consider the following polynomial which defines a smooth Helton-Vinnikov curve of degree $d=5$:
    \begin{footnotesize}
  \begin{align*}
    f(x,y,z) \, = \,&\,\,\,x^5+3 x^4 y-2 x^4 z-5 x^3 y^2-12 x^3 z^2-15 x^2
    y^3+10 x^2 y^2 z-28 x^2 y z^2+ 14 x^2 z^3 + \\
    &4 x y^4 -6 x y^2 z^2-12 x y z^3+26 x z^4+12 y^5-8 y^4 z-32 y^3
    z^2+ 16 y^2 z^3{+}48 y z^4{-}24 z^5.
  \end{align*}
\end{footnotesize}
The symmetric linear determinantal representation we seek has the form
$$ 
\begin{footnotesize}
  \begin{bmatrix}
    x+y &             0&              0 &             0&              0  \\
    0&        x+2y &            0 &             0 &             0     \\
    0&              0&        x-y &             0&              0   \\
    0&              0&             0&      x-2y  &             0   \\
    0 & 0& 0& 0& x+3y-2z
  \end{bmatrix}
\end{footnotesize}
\,\, + \,\,\, C \cdot z  ,$$
      where $C = (c_{ij})$ is an unknown symmetric $5 {\times} 5$-matrix
      with zeros on the diagonal. This leads to a system of
      $10$ polynomial equations in    the  $10$ unknowns $c_{ij}$,
      namely, $4$ quadrics, $3$ cubics, $2$ quartics and one quintic.
      The number of complex solutions equals  $16 \cdot 2080 = 33280$,
      which is less than the B\'ezout bound of $ 2^4 \cdot 3^3 \cdot 4^2 \cdot 5 = 16 \cdot 2160 = 34560$.
      One of the $33280$ solutions is the following integer matrix,
      which we had used to construct $f(x,y,z)$ in the first place:
$$ C \quad = \quad       
\begin{bmatrix}
 \,0 &  2&  1&  0&  0 \\
\, 2&  0&  0&  0&  1 \\
\, 1&  0&  0&  2&  1 \\
\, 0&  0&  2&  0& -1 \\
\, 0&  1&  1& -1&  0
\end{bmatrix}
$$
Of course, the other $15$ matrices in the same equivalence class
have the same friendly integer entries. The other $16 \cdot 2079 = 33264$ complex solutions
were found numerically using 
the software {\tt Bertini} \cite{bertini}. Of these,
 $16\cdot 63$ are real.
Chen's code, based on {\tt Bertini}, outputs one representative per class.
One of the real solutions~is
$$ C \,\, \approx \,\,
\begin{footnotesize}
  \begin{bmatrix}
    0&  1.8771213868&  0.1333876113&  0.3369345269&  0.2151885297 \\
    1.8771213868&           0&  1.3262201851&  0.1725327846&  1.0570303927 \\
    0.1333876113&  1.3262201851&           0& -2.0093203944& -0.8767796987 \\
    0.3369345269&  0.1725327846& -2.0093203944&           0& -0.7659896773 \\
    0.2151885297& 1.0570303927& -0.8767796987& -0.7659896773& 0
  \end{bmatrix}.
\end{footnotesize}
$$

\section{Constructing Contact Curves}

A result in classical algebraic geometry states that the equivalence classes
of symmetric linear determinantal representations of a 
plane curve of degree $d$ are in one-to-one 
correspondence with certain systems of contact curves of degree $d-1$.
Following \cite[Prop. 4.1.6]{Dol}, we now state this in precise terms.
Suppose that our given polynomial is $f = {\rm det}(M)$ where $M=(\ell_{ij})$
is a symmetric $d {\times} d$-matrix of linear forms in $ x,y,z$.  We can then form the $d\times d$ adjoint matrix, ${\rm adj}(M)$, 
whose entry $m_{ij}$ is the $(i,j)$th $(d-1)$-minor of $M$ multiplied by $(-1)^{i+j}$. For any vector of parameters $u = (u_1,u_2,\ldots,u_d)^T$, 
we consider the degree $d-1$ polynomial
\begin{equation}\label{eq:contact} g_{u}(x,y,z) \quad = \quad u^T{\rm adj}(M) u\quad= \quad \begin{bmatrix} u_1\\u_2\\ \vdots \\ u_d \end{bmatrix}^T \begin{bmatrix}
m_{11} & m_{12} & \cdots & m_{1d} \\
m_{12} & m_{22} & \cdots & m_{2d} \\
  \vdots & \vdots & \ddots & \vdots  \\
m_{1d} & m_{2d} & \cdots & m_{dd} 
 \end{bmatrix} \begin{bmatrix} u_1\\u_2\\ \vdots \\ u_d \end{bmatrix}.\end{equation}
The curve $\V(g_u)$ has degree $d-1$, and it is a 
{\em contact curve}, which means that all intersection points
of $\V(f)$ and $\V(g_u)$ have even multiplicity, generically multiplicity $2$.
  To see this,
we use \cite[Lemma 4.1.7]{Dol}, which states that, for any $u, v \in \C^d$,
 \[g_u(x,y,z) \cdot g_v(x,y,z)\; - \;(u^T{\rm adj}(M) v)^2 \;\in\; \langle \,f\, \rangle
 \quad {\rm in} \,\,\,\C[x,y,z] .\]
In particular, for $u=e_i$, $v=e_j$, this shows that both $\V(m_{ii})$ and $\V(m_{jj})$ are contact curves,
 and $\V(m_{ij})$ meets $\V(f)$ in their $d(d-1)$ contact points.

We say that two contact curves $\V(g_1)$ and $\V(g_2)$ of degree $r$ lie in the same \textit{system} 
if there exists another curve $\V(h)$ of degree $r$ that meets $\V(f)$ precisely in the $r\cdot d$ points
 $\;\V(f,g_1) \cup \V(f,g_2)$.  
A system of 
contact curves is called \textit{syzygetic} if it contains a polynomial of the form $\ell^2 g$, where $\ell$ is linear and  $g$ is a contact curve of degree $r-2$,
 and \textit{azygetic} otherwise.  
 A contact curve of $\V(f)$ is called syzygetic, resp. azygetic,
if it lies in a system that is syzygetic, resp. azygetic.

Dixon \cite{Dix} proved that the contact curves $\V(g_u)$ are azygetic and 
all azygetic contact curves of degree $d-1$ appear as $g_u$ for some determinantal 
representation $f=\det(M)$. In particular, he gives a method of constructing a determinantal
 representation $M$ for $f$ starting from one azygetic contact curve 
of degree $d-1$. 

The input to Dixon's algorithm is an azygetic contact curve $g$ of degree $d-1$
of the given curve $f$ of degree $d$.
Given the two polynomials $f$ and $g$,  the algorithm 
constructs the matrix $\widehat{M}={\rm adj}(M)$ 
in~(\ref{eq:contact}). It proceeds as follows.
Since $\V(g)$ meets $\V(f)$ in $d(d-1)/2$ points, the vector space of polynomials of degree 
$d-1$ vanishing at these points (without multiplicity) has dimension $d$. Let $m_{11}=g$ and extend $m_{11}$
to a basis $\{m_{11}, m_{12}, \hdots, m_{1d}\}$ of this vector space. 
For $i,j \in \{2,3,\hdots, d\}$, the polynomial $m_{1i} m_{1j}$ vanishes to order two on $\V(f,m_{11})$,
so it lies in the ideal $\langle m_{11}, f \rangle$.
Using the Extended Buchberger Algorithm, one finds a 
degree $d-1$ polynomial $m_{ij}$ such that
$\,m_{1i} m_{1j} - m_{11} m_{ij} \in \langle f \rangle $.
The $d {\times} d$-matrix $\widehat{M}= (m_{ij})$ has rank $1$ modulo $\langle f \rangle$, therefore
its $2 {\times} 2 $-minors are multiples of $f$. 
This implies that the adjoint matrix
${\rm adj}(\widehat{M}) = {\rm det}(\widehat{M}) \cdot \widehat{M}^{-1}$ has the form  $\,\lambda f^{d-2} \cdot M \,$
where $\lambda  \in \C \backslash \{0\}$ and
$M$ is a symmetric matrix of linear forms with ${\rm det}(M) = f$.
One could run through this
construction starting from a  syzygetic contact curve,  
but the resulting matrix $\widehat{M}$ would have determinant zero. 
 
The main challenge with Dixon's algorithm is to construct
its input polynomial $g$. Suitable contact curves are not easy to find.
A symbolic implementation of the algorithm may involve
large field extensions, and we found it  equally difficult to implement numerically.
For further discussions see \cite[\S 2.2]{MB}
and \cite[\S 2]{PSV}.
 
\begin{figure}
 \includegraphics[width=6cm]{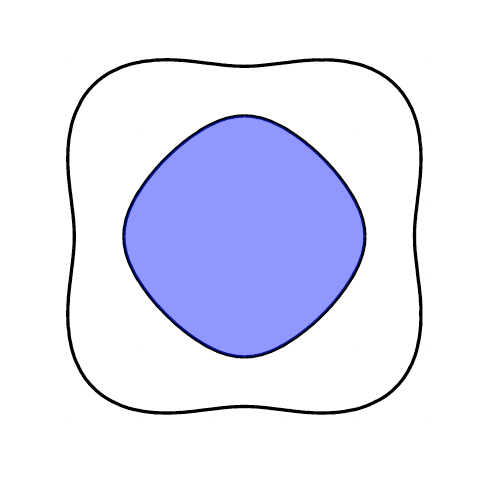} \quad
 \includegraphics[width=6cm]{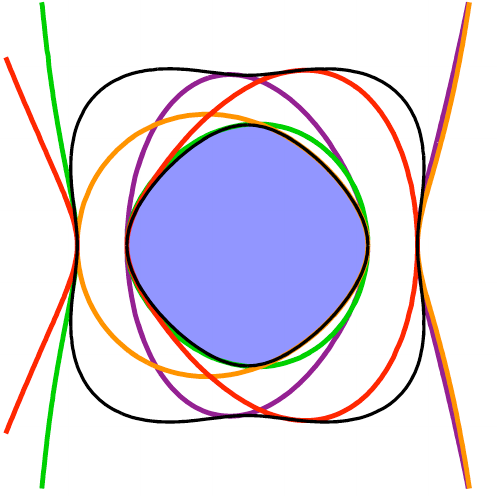} 
\vskip -0.3cm
\caption{A quartic Helton-Vinnikov curve and four contact cubics}
\label{fig:contact}
\end{figure}

\begin{remark} Starting from a \textit{real} azygetic contact curve $g$,
 one can use Dixon's method to produce a \textit{real} 
determinantal representation of $f$.
A determinantal representation $M$ is equivalent to its conjugate $\overline{M}$ if and only if the system of contact curves $\{u^T{\rm adj}(M)u:u\in \C^4\}\subset \C[x,y,z]_3$ is real, i.e.~invariant under conjugation.  The representation $M$ is equivalent to a real matrix if and only if this system contains a real contact curve.  By \cite[Prop~2.2]{GroHa}, if the curve $\V(f)$ has real points, then these two notions of reality agree. However, this approach does not easily reveal whether an 
equivalence class contains a real definite representative.
\end{remark}

\begin{example}
\label{ex:vinnikov}
The following Helton-Vinnikov quartic was studied in \cite[Ex.~4.1]{PSV}:
 $$                                                                              
f(x,y,z) \quad = \quad  2x^4 + y^4 + z^4-3 x^2 y^2-3 x^2 z^2+y^2 z^2.           
$$
It is shown on the left in Figure \ref{fig:contact}. It has
a symmetric determinantal representation
\begin{equation}
\label{ex:hv}
f(x,y,z) \quad = \quad
{\rm det} \begin{bmatrix}
                      u x + y     &  0  &      a z &      b z \\
                          0   &     u  x  - y &       cz &       d z \\
                        a z &     c  z  &   x + y  &    0  \\
			b z  &    d  z  &     0   &   x - y
 \end{bmatrix}, 
 \end{equation}
 where $ a = -0.5746...$, 
 $b =  1.0349...$,
$ c =  0.6997...$,
$d =  0.4800...$
and $u =  \sqrt{2}$ 
are the coordinates of a real zero of 
the following maximal ideal in $\mathbb{Q}[a,b,c,d,u]$:
$$ \begin{matrix} \bigl\langle
u^2-2,
256 d^8 - 384 d^6 u{+}256 d^6{-}  384 d^4 u{+}672 d^4{-}336 d^2 u{+}448 d^2{-}84 u\
{+}121,  \\
\,\,\,\,\,\,\,
23 c + 7584 d^7 u{+}10688 d^7{-}5872 d^5 u{-}8384 d^5{+}1806 d^3 u{+}2452 d^3{-}181 d u{-}307 d, \\
\,\,\,\,\,
23 b + 5760 d^7 u{+}8192 d^7{-}4688 d^5 u{-}6512 d^5{+}1452 d^3 u{+}2200 d^3{-}212 d u{-}232 d, \\
\,\,\,
23 a - 1440 d^7 u{-}2048 d^7{+}1632 d^5 u{+}2272 d^5{-}570 d^3 u{-}872 d^3{+}99 d u{+}81d \,
\bigr\rangle.
\end{matrix}
$$
The principal $3 {\times} 3$-minors of the $4 {\times} 4$-matrix in
(\ref{ex:hv}) are Helton-Vinnikov polynomials of degree $3$.
They are the four contact cubics shown on the right in Figure~\ref{fig:contact}. \qed
\end{example}

In summary,  Dixon's method furnishes an explicit bijection between equivalence
classes of symmetric determinantal representations (\ref{lmr1}) of a fixed curve $\V(f)$ of degree
$d$ and azygetic systems of contact curves of $\V(f)$ of degree $d-1$. \smallskip

For $d=3$, there is another geometric approach to finding representations (\ref{lmr1}). 
We learned this from Didier Henrion who attributes it to Fr\'ed\'eric Han.
Suppose we are given a general homogeneous
cubic $ f(x,y,z)$.  We first compute the Hessian
$${\rm Hes}(f) \quad = \quad
{\rm det} \begin{bmatrix}
\partial^2{f} /\partial{x}^2 & \partial^2{f} /\partial{x}\partial{y} &  \partial^2{f} /\partial{x}\partial{z} \\
 \partial^2{f} /\partial{x}\partial{y} & \partial^2{f} /\partial{y}^2 & \partial^2{f} /\partial{y}\partial{z} \\
 \partial^2{f} /\partial{x}\partial{z} &  \partial^2{f} /\partial{y}\partial{z} & \partial^2{f} /\partial{z}^2 
\end{bmatrix}.
$$
This is also a cubic polynomial, and hence so is the linear combination
$ \,t\cdot f + {\rm Hes}(f) $, where $t$ is a parameter. 
We now take the Hessian of that new cubic, with the aim of recovering~$f$.
It turns out that we can do this by solving
a cubic equation in $t$.

\begin{proposition}
There exist precisely three points $(s,t) \in \C^2$ such that
\begin{equation}
\label{HesEq}
s \cdot f \quad = \quad  {\rm Hes} \bigl( \,t \cdot f +  {\rm Hes}(f) \,\bigr) .
 \end{equation}
The resulting three symmetric determinantal representations of $f$
are inequivalent.
\end{proposition}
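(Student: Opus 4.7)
The plan is to work inside the Hesse pencil of $f$. Setting $h := {\rm Hes}(f)$, the linear system $\mathcal{P} := \{\lambda f + \mu h : [\lambda:\mu] \in \PP^1\}$ is precisely the pencil of plane cubics through the nine inflection points of $\V(f)$, since these points impose only eight conditions on cubics. I would first show that the Hessian restricts to a self-map $\mathcal{P}\to\mathcal{P}$: every member of $\mathcal{P}$ passes through the nine base points, which are the inflection points of any smooth member, so its Hessian vanishes at them; the extension to singular members is by continuity.

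Once this is in place, I would write
\[
{\rm Hes}(\lambda f + \mu h) \;=\; A(\lambda,\mu)\, f \;+\; B(\lambda,\mu)\, h,
\]
where $A,B$ are homogeneous of degree $3$ in $(\lambda,\mu)$, since the Hessian is cubic in the coefficients of its argument. Specializing $\mu = 0$ gives ${\rm Hes}(\lambda f) = \lambda^3 h$, so $A(\lambda,0) = 0$ and $B(\lambda,0) = \lambda^3$. Equation~(\ref{HesEq}) thus becomes $B(t,1) = 0$ together with $s = A(t,1)$, and $B(t,1)$ is a monic polynomial of degree exactly $3$ in $t$. To see that the three roots are distinct for generic $f$, it suffices to exhibit one cubic for which the discriminant of $B(t,1)$ is nonzero; for the Fermat cubic $f = x^3+y^3+z^3$ (so $h = 216\,xyz$), a direct computation gives
\[
{\rm Hes}(tf+h) \;=\; -279936\, t\cdot f \;+\; (t^3 + 93312)\, h,
\]
and $B(t,1) = t^3 + 93312$ has three distinct cube roots. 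Hence there is a Zariski open set of cubics for which~(\ref{HesEq}) has precisely three solutions $(s_i,t_i)$.

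For each such solution, let $M_i$ denote the matrix of second partial derivatives of $F_i := t_i f + h$: this is a symmetric $3\times 3$ matrix of linear forms with $\det M_i = {\rm Hes}(F_i) = s_i f$, and after rescaling by $s_i^{-1/3}$ one obtains a symmetric determinantal representation of $f$. By Theorem~\ref{equivcount} specialized to $d=3$ there are exactly $2^0(2^1+1) = 3$ equivalence classes of such representations. The main obstacle is to prove that the three matrices $M_1, M_2, M_3$ are pairwise inequivalent, so that they indeed realize all three classes. For this I would invoke the twofold Euler identity $x^T M_i x = 6\, F_i(x)$, which recovers the cubic $F_i$ from its Hessian matrix. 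An equivalence $U M_i U^T = M_j$ then forces $F_j(x) = F_i(U^T x)$, so $V := U^T$ permutes the nine inflection points of $\V(f)$ and therefore lies in the Hesse group $G_{216}$ stabilizing these nine points. For a generic cubic $f$, the subgroup of $G_{216}$ that sends $F_j$ to $F_i$ while preserving the normalization (coefficient $1$ on $h$) is trivial, forcing $t_i = t_j$. Combined with the count above, the three representations $M_1, M_2, M_3$ represent the three classes guaranteed by Theorem~\ref{equivcount}.
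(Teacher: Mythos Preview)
Your approach is correct and takes a genuinely different route from the paper. The paper immediately reduces to the Hesse normal form $f = x^3+y^3+z^3 - mxyz$ by a linear change of coordinates and then either cites Hulek or solves the resulting coefficient equations to produce an explicit cubic in $t$ (together with a formula for $s$ in terms of $m$ and $t$). You instead work intrinsically with the Hesse pencil $\langle f, h\rangle$: you show that the Hessian restricts to a degree-$3$ self-map of this pencil, read off that $B(t,1)$ is monic cubic, and use the Fermat cubic as a single test case to certify that the three roots are distinct on a Zariski-open set. Your route is coordinate-free and more conceptual; the paper's route has the advantage of yielding explicit universal formulas in the Hesse parameter $m$.

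One step in your inequivalence argument needs to be made explicit. Using the Euler identity to deduce that an equivalence $U M_i U^T = c\,M_j$ forces $F_i$ and $F_j$ to be projectively equivalent via $V = U^T$, and that $V$ lies in the Hesse group $G_{216}$, is fine. But the bare assertion that ``for generic $f$ the subgroup of $G_{216}$ sending $F_j$ to $F_i$ is trivial'' is not automatic: $G_{216}$ acts on the pencil through a quotient isomorphic to $A_4$, so a generic pencil member sits in an orbit of size $12$, and there is no a priori reason the three $F_i$ avoid each other's orbits. What closes the gap is the ${\rm PGL}$-equivariance of the Hessian: since $V$ carries $\V(F_j)$ to $\V(F_i)$, it carries $\V({\rm Hes}(F_j)) = \V(f)$ to $\V({\rm Hes}(F_i)) = \V(f)$, so $V$ stabilizes $\V(f)$. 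For generic $f$ this stabilizer is exactly the order-$18$ kernel of the $G_{216}$-action on the pencil, hence $V$ fixes $\V(F_j)$ as a pencil member and $t_i = t_j$. You should insert this sentence. (The paper's own justification here --- ``the Hessian normal form of a ${\rm PGL}(3,\C)$-orbit of cubics is unique'' --- is at least as compressed, so your argument is not weaker, only incomplete at the same spot.)
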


\begin{proof}
The statement is invariant under linear changes of coordinates in $\PP^2$,
so, by \cite[Lemma 1]{AD}, we may assume that  the given
cubic is in {\em Hesse normal form}:
$$ f(x,y,z) \quad = \quad x^3+y^3+z^3 - m xyz . $$
In that case, the result follows from the discussion in \cite[page 139]{Hul}.
Alternatively, we can solve the equations obtained by comparing coefficients
in (\ref{HesEq}). This leads~to 
$$\begin{footnotesize}
\begin{matrix}
  t^3\,-\,(12m^4+2592m)t \,-16m^6+8640m^3+93312 \,\,= \,\, 0 \qquad
  \quad \hbox{and}
  \qquad \qquad \qquad \qquad \qquad \\ \quad
  s \,=\,(12m^4+2592m) t^2 \,+\,(48m^6-25920m^3-279936)t \,+\,
  48m^8+20736m^5+2239488m^2.
\end{matrix}\end{footnotesize}
$$
This has three solutions $(s,t)\in \C^3$. The resulting representations (\ref{lmr1}) are
inequivalent because the Hessian normal form of a 
${\rm PGL}(3,\C)$-orbit of cubics is unique.
\end{proof}

\section{Evaluating Theta Functions}\label{sec:theta}

The proof of the Helton-Vinnikov Theorem
relies on a formula, stated in \cite[Eq.~4.2]{HV}, 
that gives a positive definite determinantal
representation of a Helton-Vinnikov curve in terms of theta functions
and the period matrix of the curve.  Our aim in this section is to 
explain this formula and to report on computational experiments with it.
Numerical algorithms for computing theta functions, period
matrices and Abelian integrals have become available in recent years
through work of Bobenko, Deconinck, Heil, van Hoeij, Patterson,
Schmies, and others \cite{DeEtAl, DP, DvH}.
There exists an implementation in \texttt{Maple},
and we used that for our computations. Our
  \texttt{Maple} worksheet that evaluates the Helton-Vinnikov formula
can be found~at
\begin{equation}
\label{eq:url}
  \hbox{\tt www.math.uni-konstanz.de/$\sim$plaumann/theta.html}  
  \end{equation}

Before stating the Helton-Vinnikov formula, we review the basics on
theta functions. Our emphasis will be on clearly defining the
ingredients of the formula rather than explaining the underlying theory.
For general background see \cite{Mum}.  Fix $g \in \N$ and
let $\mathcal{H}^g$ be the \emph{Siegel upper half-space}, which
consists of all complex, symmetric $g\times g$-matrices whose
imaginary part is positive definite. The \emph{Riemann theta function}
is the holomorphic function on $\C^g\times\mathcal{H}^g$ defined by
the exponential~series
\[
\theta({\bf u},\Omega)
\quad = \quad
\sum_{m\in {\Z}^g} \exp\bigl(\pi i(m^T\Omega m+2m^T {\bf u})\bigr),
\]
where $i = \sqrt{-1}$,
$\,{\bf u}=( u_1,\ldots, u_g)\in\C^g\,$ and $\,\Omega\in\mathcal{H}^g$. We
will only need to consider $\theta({\bf u},\Omega)$ as a function in
${\bf u} $, for a fixed matrix $\Omega$, so we may drop $\Omega$ from the
notation. In other words, we define $\theta\colon\C^g\to\C$ by $\theta({\bf u})=\theta({\bf u},\Omega)$.
The theta function is
quasi-periodic with respect to the lattice
$\Z^g+\Omega\Z^g\subset\C^g$, which means 
\[
\theta( {\bf u}+{\bf m}+\Omega {\bf n})\,\,\, = \,\,\,
\exp\bigl(\pi i (-2 {\bf n}^T {\bf u} - {\bf n}^T \Omega {\bf n})\bigr)\cdot\theta( {\bf u}),
\quad\hbox{for all ${\bf m},{\bf n}\in\Z^g$.}
\]
 A \emph{theta characteristic} is
 a vector $\epsilon=  {\bf a}+\Omega {\bf b}
 \in\C^g$ with ${\bf a},{\bf b}\in\{0,\frac{1}{2}\}^g$.  The function
\[
\theta[\epsilon]({\bf u} )\,\,\, = \,\,\,
\exp\bigl(\pi i({\bf a}^T\Omega
{\bf a}+2 {\bf a}^T({\bf u} +{\bf b}))\bigr)\cdot\theta({\bf u}+\Omega {\bf a}+{\bf b})
\]
is the \emph{theta function with characteristic $\epsilon$}. There are
$2^{2g}$ different theta characteristics, indexed by ordered pairs
$(2{\bf a},2 {\bf b})$ of binary vectors in $\{0,1\}^g$.
We also use the notation  $ \,\theta\biggl[
 {\begin{matrix} 2 {\bf a}\\ 2 {\bf b} \end{matrix}} \biggr]( {\bf u})\, $
 for the function $\,\theta[\epsilon]({\bf u})$.
  For $\epsilon = 0$ we simply recover $\theta({\bf u})$.

Let $f\in\C[x,y,z]_d$ be a homogeneous polynomial of degree
$d$. Assume that the projective curve $X=\mathcal{V}_\C(f)$ is smooth
and thus a compact Riemann surface of genus $g=\frac12(d-1)(d-2)$. Let $(\omega_1',\dots,\omega'_g)$ be a basis of the
$g$-dimensional complex vector space of holomorphic $1$-forms on $X$,
and let $\alpha_1,\dots,\alpha_g,\beta_1,\dots,\beta_g$ be closed
$1$-cycles on $X$ that form a {\em symplectic basis} of
$H_1(X,\Z)\cong\Z^g\times\Z^g$. This means that the intersection
numbers of these cycles on $X$ satisfy
$(\alpha_j \cdot \alpha_k)=(\beta_j \cdot \beta_k)=0$, and
$(\alpha_j \cdot \beta_k)=\delta_{jk}$ for all $j,k=1,\dots,g$. The
\emph{period matrix} of the curve $X$ with respect to these bases is the complex
$g\times 2g$-matrix $\bigl(\Omega_1 \,|\, \Omega_2 \bigr )$ whose entries are
\[ (\Omega_1)_{jk\,}=\int_{\alpha_k} \! \omega'_j \quad \hbox{and} \quad
(\Omega_2)_{jk}\,=\int_{\beta_k} \! \omega'_j,\quad 
\hbox{for} \,\,\, j,k=1,\ldots,g. \]
The $g {\times} g$-matrices $\Omega_1$ and $\Omega_2$ are invertible. Performing the coordinate
change
$$(\omega_1,\dots,\omega_g) \,\,\, = \,\,\, (\omega'_1,\dots,\omega'_g) \cdot (\Omega_1^{-1})^T $$
leads to a basis in which the period matrix is of the form
$\bigl(\,{\rm Id}_g\, |\, \Omega_1^{-1}\Omega_2 \,\bigr)$. The basis
$\omega=(\omega_1,\dots,\omega_g)$ is called a \emph{normalized basis
  of differentials} and depends uniquely on the symplectic homology
basis. The $g\times g$-matrix $\Omega=\Omega_1^{-1}\Omega_2$ is
symmetric and lies in the Siegel upper half-space $\mathcal{H}^g$.  It
is called the \emph{Riemann period matrix} of the polynomial
$f(x,y,z)$ with respect to the homology basis
$(\alpha_1,\dots,\alpha_g,\beta_1,\dots,\beta_g)$.

With the given polynomial $f$ we have now associated a system
$\big\{\theta[\epsilon](\,\cdot\,,\Omega) \bigr\}$ of $2^{2g}$ theta
functions with characteristics.  A theta characteristic $\epsilon={\bf
  a}+\Omega {\bf b}$ is called \emph{even} (resp.~\emph{odd}) if the
scalar product $(2 {\bf a})^T(2{\bf b})$ of its binary vector labels
is an even (resp.~odd) integer.  This is equivalent to
$\theta[\epsilon]$ being an even (resp.~odd) function in ${\bf u}$.
In symbols, we have $\,\theta[\epsilon](-{\bf u})=(-1)^{4 {\bf a}^T
  {\bf b}}\theta[\epsilon]({\bf u})$.  Changing symplectic
bases of $H_1(X,\Z)$ corresponds to the right-action of the symplectic
group ${\rm Sp}_{2g}(\Z)$ on the period matrix $\bigl(\Omega_1
\,|\,\Omega_2 \bigr)$. This action will permute the theta
characteristics. In particular, there is no distinguished even theta
characteristic $0$.

Finally, we define the \emph{Abel-Jacobi map} by
$\phi(P)=(\int_{P_0}^P\omega_1,\dots,\int_{P_0}^P\omega_g)^T$ for
$P\in X$, where $P_0\in X$ is any fixed base point. This is a
holomorphic map, but it is well-defined only up to the period lattice
$\Lambda=\Z^g+\Omega\Z^g\subset\C^g$. In other words, the Abel-Jacobi
map is a holomorphic map $\phi\colon X\to{\rm Jac}(X)=\C^g/\Lambda$.

\smallskip

We are now ready to state the formula for (\ref{lmr1})
in terms of theta functions.

\begin{theorem}[Helton-Vinnikov \cite{HV}] \label{hvthm} Let $f \in \R[x,y,z]_d$ with $f(1,0,0)= 1$ 
and let $X$ denote $\mathcal{V}_\C(f)\subset \PP^2$.
We make the following two assumptions:
\begin{enumerate}
\item The curve $X$ is a non-rational Helton-Vinnikov curve with
  the point $(1:0:0)$ inside its innermost oval. The latter means that,
  for all $v \in \R^3 \backslash \{0\}$, the univariate  polynomial 
  $f(v+t\cdot(1,0,0))\in\R[t]$ has only real zeros.
    \item The $d$ real intersection points of $X$ with
    the line $\{z=0\}$ are distinct non-singular points 
    $Q_1,\ldots,Q_d$, with coordinates
        $Q_i = (-\beta_j:1: 0)$
    where $\beta_j \not=0$.
\end{enumerate}
Then $\,f(x,y,z)\, = \, {\rm det}({\rm Id}_d x + By + Cz)\,$
where $B = {\rm diag}(\beta_1,\ldots,\beta_d)$ and $C$ is real symmetric
with diagonal entries $c_{jj}$ as in  (\ref{diagC}). The
 off-diagonal entries of $C$ are 
\begin{equation}
\label{amazingformula}
 c_{jk} =
\frac{\beta_k-\beta_j}{ \theta[\delta](0)}
\cdot \frac{\theta[\delta]\bigl(\phi(Q_k) - \phi(Q_j)\bigr)}
{\theta[\epsilon]\bigl(\phi(Q_k) - \phi(Q_j)\bigr)} 
\sqrt{\frac{ \,\omega \cdot \nabla\theta[\epsilon](0) }
{-d(z/y)}(Q_j)} 
\sqrt{\frac{\,\omega \cdot \nabla\theta[\epsilon](0)}
{-d (z/y) }(Q_k)}. 
\end{equation}
Here $\epsilon$ is an arbitrary odd theta characteristic
and $\delta$ is a suitable even theta characteristic with $\theta[\delta](0)\neq 0$.
The theta functions are taken with respect to a normalized basis of
differentials $\omega=(\omega_1,\dots,\omega_g)$, and $\phi\colon
X\to{\rm Jac}(X)$ is the Abel-Jacobi map.
\end{theorem}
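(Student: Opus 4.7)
The plan is to use the correspondence from Theorem~\ref{equivcount}: each equivalence class of symmetric determinantal representations (\ref{lmr1}) corresponds to an ineffective even theta characteristic $\delta$ on the smooth curve $X=\V_\C(f)$. I would first construct, from a choice of $\delta$, the line bundle $L_\delta$ with $L_\delta^{\otimes 2}\cong K_X\cong\mathcal{O}_X(d-3)$, and consider $M:=L_\delta\otimes\mathcal{O}_X(1)$. A short Riemann--Roch computation, using ineffectivity of $\delta$, gives $h^0(X,M)=d$. Multiplication by the linear forms $x$, $y$, $z$ defines three endomorphisms of the space of sections, symmetric with respect to the pairing $H^0(M)\times H^0(M)\to H^0(M^{\otimes 2}\otimes\mathcal{O}(-2))\cong H^0(K_X(2))$, followed by a canonical residue functional. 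This gives a symmetric linear determinantal representation $Ax+By+Cz$ of $f$, intrinsic up to choice of basis.

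The second step is to choose the basis. Each intersection point $Q_j=(-\beta_j:1:0)$ of $X$ with $\{z=0\}$ defines a section $s_j\in H^0(M)$ vanishing on the other $d-1$ such points (using ineffectivity of $\delta$ to ensure $s_j$ is uniquely determined up to scalar). Normalizing $s_j$ so that its evaluation at $Q_j$, in the local coordinate $z/y$, equals a fixed constant, the operator "multiplication by $x$" becomes the identity and "multiplication by $y$" becomes $\mathrm{diag}(\beta_1,\ldots,\beta_d)$, as desired. The diagonal entries of $C$ are then forced to be (\ref{diagC}).

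The third, central step is to compute the off-diagonal entries $c_{jk}$ explicitly. The matrix entry $c_{jk}$ equals, up to the normalization coming from the local trivialization of $M$ at $Q_j$ and $Q_k$, the value at $(Q_j,Q_k)$ of the Szeg\H{o} kernel $S_\delta(P,Q)$ associated with $\delta$. I would invoke the classical theta-function identity
\[
S_\delta(P,Q)\;=\;\frac{\theta[\delta]\bigl(\phi(P)-\phi(Q)\bigr)}{\theta[\delta](0)\cdot E(P,Q)},
\]
where $E(P,Q)$ is the prime form on $X$. Expanding the prime form via an auxiliary odd theta characteristic $\epsilon$ as
\[
E(P,Q)\;=\;\frac{\theta[\epsilon]\bigl(\phi(P)-\phi(Q)\bigr)}{h_\epsilon(P)\,h_\epsilon(Q)},\qquad h_\epsilon(P)^2\;=\;\sum_{k=1}^g\omega_k(P)\,\frac{\partial\theta[\epsilon]}{\partial u_k}(0),
\]
and combining with the local trivialization factor $-d(z/y)$ at each $Q_j$, one obtains exactly the factorized formula (\ref{amazingformula}), including the two square-root factors. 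The prefactor $\beta_k-\beta_j$ emerges from the symmetric pairing restricted to the basis $\{s_j\}$.

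The final step is the reality claim. Under the Helton--Vinnikov hypotheses, $X$ carries an anti-holomorphic involution whose fixed locus consists of the nested ovals, and a real definite $\delta$ (one of the $2^g$ counted in Theorem~\ref{equivcount}) produces a real symmetric $C$. Positive definiteness at the interior point $(1:0:0)$ follows from assumption (1), which guarantees that hyperbolicity along every direction passes through this point. The technical crux I expect to be the hardest is verifying the prime-form normalization and the square-root branch choice: the factors $\sqrt{\,\cdot\,}$ in (\ref{amazingformula}) only make sense modulo signs, and consistency of signs across all pairs $(j,k)$ relies on a careful choice of a square root of the canonical bundle compatible with $\delta$. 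Helton and Vinnikov \cite{HV} handle this via a continuous family argument; I would follow their approach, deducing the precise form of (\ref{amazingformula}) by matching residues at the $Q_j$'s and invoking uniqueness from Theorem~\ref{equivcount}.
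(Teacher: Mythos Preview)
The paper does not give its own proof of this theorem. Immediately after the statement it says: ``The proof of Theorem~\ref{hvthm} given in \cite{HV} is only an outline. It relies heavily on earlier results on Riemann surfaces due to Ball and Vinnikov in \cite{BV, Vin}. As we found these not easy to read, we were particularly pleased to be able to verify Theorem~\ref{hvthm} with our experiments.'' In other words, the paper's role here is to explain and numerically test the formula, not to reprove it.

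Your proposal is a reasonable reconstruction of the argument behind \cite{HV,BV,Vin}: build the representation from the theta-characteristic line bundle $L_\delta$, pick the basis of $H^0(L_\delta(1))$ adapted to the points $Q_j$ on $\{z=0\}$, recognize the off-diagonal entries as values of the Szeg\H{o} kernel for $\delta$, and then expand the Szeg\H{o} kernel via the prime form and an auxiliary odd characteristic $\epsilon$ to obtain exactly the shape of~(\ref{amazingformula}). This is indeed the mechanism, and the Fay/Ball--Vinnikov identities you invoke are the correct ones.

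One technical correction: your sentence ``multiplication by the linear forms $x,y,z$ defines three endomorphisms of the space of sections'' is not right as stated, since multiplication by a linear form sends $H^0(M)$ to $H^0(M(1))$, a larger space. The matrix of linear forms comes instead from the Beauville resolution $0\to\mathcal{O}_{\PP^2}(-2)^d\to\mathcal{O}_{\PP^2}(-1)^d\to M\to 0$, and its symmetry from the self-duality $M\cong M^\vee(d-1)$ induced by $L_\delta^{\otimes 2}\cong K_X$. With that adjustment your outline is sound; the genuinely delicate part, as you correctly flag, is the consistent choice of square roots across all pairs $(j,k)$, which in \cite{HV,Vin} is handled by fixing a spin structure and tracking signs through the prime-form normalization.
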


The remarkable expression for the constants $c_{jk}$ in 
(\ref{amazingformula}) does not depend on the choice of the odd
characteristic $\epsilon$.  If the curve $X$ is smooth, then all
equivalence classes of symmetric determinantal representations are
obtained when $\delta$ runs through all non-vanishing even
theta characteristics.
The proof of Theorem \ref{hvthm}
given in \cite{HV} is only an outline. 
It relies heavily on earlier results
on Riemann surfaces due to Ball and Vinnikov in \cite{BV, Vin}.
As we found these not easy to read,
we were particularly pleased to be able to verify Theorem \ref{hvthm}
with our experiments.

The Helton-Vinnikov formula (\ref{amazingformula}) remains valid
when $X$ is a singular curve. In that case the period matrix, the differentials, and the
Abel-Jacobi map are meant to be defined on the
desingularization of $X$, a compact Riemann surface of genus $g$ with
$g < \frac{1}{2}(d-1)(d-2)$.  The formula holds as stated, but one no longer
 obtains all equivalence classes of symmetric determinantal representations.

The Riemann period matrix, the theta functions, their directional
derivatives, and the Abel-Jacobi-map can all be evaluated numerically
in recent versions of \texttt{Maple}. When computing the expressions
under the square roots, note that both the numerator and denominator are
 $1$-forms on $X$. Every holomorphic $1$-form on the curve $X$
can be written as $r \cdot du$, where $u=z/x$, $v= y/x$, and $r$ is a
rational function in $u$ and $v$. The {\tt algcurves} package in
\texttt{Maple} will compute $\omega$ in this form, so we obtain
$\omega_j = r_j(u,v) \cdot du$.
To evaluate the $1$-form $d(z/y)$, we 
set $h(u,v) = f(1,v,u)$ and use the identity
$ \, d h(u,v) = \frac{\partial h}{\partial u} du + \frac{\partial h}{\partial v} dv = 0$.
This implies
\begin{equation}
\label{oneform}
 d(z/y)\,\,=\,\,d(u/v)\,\,=\,\, \frac{1}{v} du - \frac{u}{v^2} dv \,\, = \,\,
\biggl(
\frac{1}{v} + \frac{u \frac{\partial h}{\partial u}}{v^2 \frac{\partial h}{\partial v}}\biggr) du,
\end{equation}
so that $d(z/y)(Q_j)=-\beta_jdu$. Under the square root signs in
(\ref{amazingformula}), the factor $du$ appears in the numerator
$\omega$ and also in the denominator (\ref{oneform}), and we cancel
it.  Hence the expressions under the square roots are rational
functions, namely $r(u,v) \cdot \nabla\theta[\epsilon](0)$ divided by
the expression in parentheses on the right in (\ref{oneform}), where
$r(u,v)$ is the vector of rational functions $r_j(u,v)$.

While the evaluation of theta functions is numerically stable, we
found the computation of the period matrix and the Abel-Jacobi map to
be more fragile. Computing the $d$ vectors $\phi(Q_j)$ is also by far
the most time-consuming step. Nonetheless, {\tt
  Maple} succeeded in correctly evaluating the Helton-Vinnikov formula
for a wide range of curves with $d\le 4$, and for some of degree
$d=5$. However, the off-diagonal entries in
(\ref{amazingformula}) we found in our computations were sometimes
wrong by a constant factor (independent of $j,k$), for reasons we do
not currently understand.

For a concrete example take the quartic
in Example \ref{ex:vinnikov}. Using the formula
(\ref{amazingformula}) we obtained all eight definite determinantal
representations $\det({\rm Id}_d x + By + Cz)$.
Our \texttt{Maple} code runs for a few minutes and finds all solutions
accurately with a precision of $20$ digits.
We verified this using the prime ideal in
Example~\ref{ex:vinnikov}.

The case of smooth quintics (genus $6$) is already a challenge.
With the help of Bernard Deconinck, we were able to compute a determinantal representation 
(\ref{amazingformula}) with an error of less than
$10^{-3}$ for the quintic polynomial at the end of Section~\ref{sec:PolynomialEquations}. However, the representation we obtained was not real (see
Remark \ref{rem:realdifficult}).

\smallskip

We conclude this section with the proof of the second part of
Thm.~\ref{equivcount} and discuss what are the suitable choices for the even theta
characteristic $\delta$ in Thm.~\ref{hvthm} that will lead to real and real
definite equivalence classes of representations. Note that a
representation obtained from the theorem is real definite if and only if
the matrix $C$ is real. The real non-definite equivalence
classes of representations correspond to the case when $C$ is a
non-real matrix for which there exists a matrix $U\in{\rm GL}_d(\C)$
such that $UU^T$, $UBU^T$, and $UCU^T$ have all real entries. Whether such $U$
exists for given complex symmetric matrices $B$ and $C$ is not at all obvious. An
explicit example is given in Ex.~\ref{thm:RealNonDef}.

For the proof of Thm.~\ref{equivcount}, we repeat the relevant part of
the statement:

\begin{theorem}\label{thm:numberrealreps}
  Let $f\in\R[x,y,z]_d$ be homogeneous and
  assume that the projective curve $X=\mathcal{V}_\C(f)$ is a smooth
  Helton-Vinnikov curve. The number of real equivalence classes
  of symmetric determinantal representations is generically either
\[
2^{g-1}(2^{k-1}+1)
\]
or one less, where $k=\lceil\frac{d}{2}\rceil$ is the number of
connected components of the set of real points $X(\R)$. Of these real equivalence classes,
exactly $2^g$ are definite.
\end{theorem}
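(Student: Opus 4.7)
My plan is to exploit the bijection from Theorem \ref{equivcount} between equivalence classes of complex symmetric determinantal representations (\ref{lmr1}) and ineffective even theta characteristics of the smooth genus-$g$ curve $X=\mathcal{V}_\C(f)$. This bijection is Galois-equivariant: the anti-holomorphic involution induced by the real structure on $X$ acts on $\mathrm{Pic}^{g-1}(X)$, and an equivalence class of representations contains a representative equivalent to its own complex conjugate precisely when the associated theta characteristic is fixed by that action. By the result of Gross-Harris quoted in the remark preceding Example \ref{ex:vinnikov}, such a class then contains a genuinely real representative, since $X(\R)\neq\emptyset$ for every Helton-Vinnikov curve.

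Next I count the fixed points. A smooth Helton-Vinnikov curve is of dividing type (Type~I), that is, $X(\R)$ separates $X(\C)$ into two connected pieces; this is the classical fact that hyperbolic smooth real plane curves are dividing. For a dividing smooth real curve of genus $g$ with $k$ real components, Vinnikov's analysis in \cite{Vin} (or the Gross-Harris count) shows that the real Jacobian $\mathrm{Jac}(X)(\R)$ has $2^{k-1}$ connected components and that its real $2$-torsion has order $2^{g+k-1}$. The parity (Arf invariant) is locally constant on the components of the set of real theta characteristics, and the ratio of even to odd works out to a standard computation giving $2^{g-1}(2^{k-1}+1)$ real even theta characteristics among the $2^{g+k-1}$ real ones. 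With $k=\lceil d/2\rceil$ and subtracting, if necessary, the unique effective real even theta characteristic flagged in Theorem \ref{equivcount}, this yields the stated count of real equivalence classes.

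For the definite count I use the Helton-Vinnikov formula (\ref{amazingformula}). For each non-vanishing real even theta characteristic $\delta$ lying in the identity component $\mathrm{Jac}(X)(\R)^\circ$, the matrix $C$ produced by the formula has all real entries and the resulting representation is positive definite. The set of complex $2$-torsion classes in $\mathrm{Jac}(X)(\R)^\circ$ has order exactly $2^g$, and each such class is an even theta characteristic since it shares the Arf invariant of the origin. Conversely, any real definite representation forces $\delta$ to sit in $\mathrm{Jac}(X)(\R)^\circ$: this is where positivity of $C$ genuinely enters, and it is built into the positivity arguments of Ball-Vinnikov \cite{BV} that underlie \cite{HV}. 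Combining both directions gives exactly $2^g$ real definite equivalence classes, as claimed.

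The main obstacle is the converse in the last paragraph, namely showing that positive definiteness of the real symmetric matrix $C$ forces the associated theta characteristic into the identity component of $\mathrm{Jac}(X)(\R)$. Rather than reprove this, I would invoke the theorem of Vinnikov \cite{Vin} that characterizes real definite representations of a rigidly convex plane curve precisely in terms of effective real divisors of degree $g-1$ supported positively on the $k$ ovals. A minor but necessary auxiliary step is confirming the dividing type of every smooth Helton-Vinnikov curve; this is standard once one relates the real components of $X$ to the nested structure of the hyperbolicity region.
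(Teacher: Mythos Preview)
Your approach is essentially the paper's: both identify real equivalence classes with conjugation-invariant even theta characteristics (using Gross--Harris \cite{GroHa} to pass from ``conjugation-invariant class'' to ``has a real representative''), count these via the structure of the real Jacobian of a dividing curve, and defer the definite characterization entirely to Vinnikov \cite{Vin}. The paper packages the count more concretely by fixing a symplectic homology basis for which $\Omega+\overline{\Omega}=H$ with $H$ an explicit rank-$(g-k+1)$ block matrix; in those coordinates the reality condition on $\delta=\mathbf{a}+\Omega\mathbf{b}$ becomes $a_1=\cdots=a_{g-k+1}=0$, and the definite condition (via \cite[Thm.~6.1]{Vin}) becomes $\mathbf{a}=0$, which makes the evenness of all $2^g$ definite characteristics immediate from $4\mathbf{a}^T\mathbf{b}=0$. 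Your identity-component language encodes the same thing, but two points need tightening: theta characteristics form a torsor over $\mathrm{Jac}(X)[2]$ rather than sitting inside $\mathrm{Jac}(X)$, so speaking of ``$2$-torsion classes in $\mathrm{Jac}(X)(\R)^\circ$ being even theta characteristics'' requires fixing a base point first; and the Arf form is quadratic, so ``sharing the Arf invariant of the origin'' is not a connectedness argument but precisely the computation $\mathbf{a}=0\Rightarrow 4\mathbf{a}^T\mathbf{b}=0$. Finally, your forward claim that formula (\ref{amazingformula}) produces real $C$ whenever $\delta$ lies in the identity component is asserted but not argued; the paper avoids this by citing \cite[Thm.~6.1 and Cor.~4.3]{Vin} for both directions of the definite characterization, which is the cleaner route.
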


\begin{proof}
  The result follows from work of Vinnikov \cite{Vin} on
  self-adjoint determinantal representations, which we
  apply here to our situation. By \cite[Prop.~2.2]{Vin}, a
  symplectic basis of $H_1(X,\Z)$ can be chosen in such a way that the
  Riemann period matrix $\Omega$
  satisfies $\Omega+\overline{\Omega}=H$, where $H$ is a $g\times g$
  block diagonal matrix of rank $r=g-k+1$ with $r/2$ blocks
  $\begin{bmatrix}0&1\\1&0 \end{bmatrix}$ in the top left corner and
  all other entries~zero.

  The linear symmetric determinantal representation obtained by
  Thm.~\ref{hvthm} from an even theta characteristic $\delta$ is
  equivalent to a real one if and only if $\delta$ is real,
  i.e.~invariant under the action of complex conjugation on the
  $g$-dimensional torus ${\rm Jac}(X)$. Since $X(\R)\neq\emptyset$, any conjugation-invariant divisor
  class on $X$ contains a real divisor (see \cite[Prop.~2.2]{GroHa}).
From such a divisor, one can construct a symmetric determinantal
  representation (see \cite{Bea} or \cite{Vin89}).
     When the symplectic basis of $H_1(X,\Z)$ is
  chosen as above, the action of complex conjugation on ${\rm Jac}(X)$
  is given by $\zeta={\bf u}+\Omega {\bf v}\mapsto
  \overline{\zeta}={\bf u}+\Omega(H{\bf u}-{\bf v})$ (see
  \cite[Prop.~2.2]{Vin}). For the even theta characteristic $\delta=
  {\bf a}+\Omega {\bf b}$, ${\bf a}$, ${\bf b}\in\{0,\frac 12\}^g$,
  the condition $\delta=\overline{\delta}$ in ${\rm
    Jac}(X)=\C^g/(\Z^g+\Omega\Z^g)$ thus becomes
\[
H{\bf a}\equiv 2{\bf b} \mod \Z^g.
\]
This happens if and only if $a_1=\cdots=a_r=0$. Counting
the possible choices of $a_{r+1},\dots,a_g$ and ${\bf b}$, we conclude
that there are exactly $2^{g-1}(2^{k-1}+1)$ even real theta
characteristics. From the first part of Thm.~\ref{equivcount}, we know
that when $d\equiv\pm 3\mod 8$, exactly one even theta characteristic
vanishes, i.e.~$\theta[\delta](0)=0$.
All other even theta
characteristics are non-vanishing and therefore correspond to
determinantal representations. Furthermore, by \cite[Thm.~6.1]{Vin},
an even theta characteristic $\delta= {\bf a}+\Omega{\bf b}$ will
correspond to a real definite equivalence class if and only if ${\bf
  a}=0$, and all such $\delta$ are always non-vanishing
\cite[Cor.~4.3]{Vin}. Thus there are exactly $2^g$ definite
representations, since ${\bf b}$ can be any element of $\{0,\frac
12\}^g$.
\end{proof}

\begin{example}\label{ex:8and12}
When $g=3$ and a homology basis has been picked as above, the even real
theta characteristics are given by the $12$ binary labels
\begin{equation}\label{eq:realthetachar}
\begin{small}
    \begin{bmatrix}000\\000\end{bmatrix} \begin{bmatrix}000\\001\end{bmatrix} \begin{bmatrix}000\\010\end{bmatrix} \begin{bmatrix}000\\011\end{bmatrix} \begin{bmatrix}000\\100\end{bmatrix}  \begin{bmatrix}000\\101\end{bmatrix}  \begin{bmatrix}000\\110\end{bmatrix}  \begin{bmatrix}000\\111\end{bmatrix} \;\; \begin{bmatrix}001\\000\end{bmatrix}  \begin{bmatrix}001\\010\end{bmatrix}  \begin{bmatrix}001\\100\end{bmatrix}
     \begin{bmatrix}001\\110\end{bmatrix}.
\end{small}
\end{equation}
The first eight labels correspond to the definite
representations and the last four correspond to the non-definite real
equivalence classes of representations.
\end{example}

\begin{remark}\label{rem:realdifficult}
  The characterization of real and real definite even theta
  characteristics provided by the proof of
  Thm.~\ref{thm:numberrealreps} depends on the choice of a particular
  symplectic homology basis. Unfortunately, the current \texttt{Maple}
  code for computing the period matrix does not give the user
  any control over the homology basis. This makes it hard to find real
  representations using Thm.~\ref{hvthm} in any systematic way.
\end{remark}

\section{Quartic Curves Revisited}\label{sec:quartics}

In this section we focus on the case of smooth quartic curves, 
studied in detail in \cite{PSV}, so we now fix $d=4$ and $g= 3$. 
Quartic curves are special because they have contact lines, i.e. bitangents, and 
we can explicitly write down higher degree contact curves as products of bitangents. 
This was exploited in \cite[\S 2]{PSV}, where we used azygetic triples of
bitangents as our input  to Dixon's algorithm (Section 3).

Plane quartics are canonical embeddings of genus $3$ curves \cite{Dol},
and there is a close connection between contact curves and theta functions. 
The $28$ bitangents of the curve are in bijection with the $28$ odd theta characteristics 
 $\epsilon=  {\bf a}+\Omega {\bf b}$, and this will be made explicit in  (\ref{eq:thetabitangents}) below.
The $36$ azygetic systems of contact cubics correspond to the $36$ even theta characteristics.
As seen in Example~\ref{ex:8and12}, of the resulting $36$ determinantal representations, $12$ 
are real, but only $8$ are definite. We can also derive the number $12$ from the combinatorics of the bitangents as in \cite{PSV}. 

\begin{proposition} A smooth Helton-Vinnikov quartic $\V(f)$ has exactly $12$ inequivalent representations
$f = \det(Ax+By+Cz)$ with $A,B,C$ symmetric and~real. 
\end{proposition}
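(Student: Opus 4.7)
The plan is to apply Theorem~\ref{thm:numberrealreps} directly to the quartic case. First I would record the relevant parameters: for $d=4$ the genus is $g=\binom{d-1}{2}=3$, and since a smooth Helton-Vinnikov quartic has exactly two nested ovals (the defining feature of a Helton-Vinnikov curve for even $d$), the number of connected components of $X(\R)$ is $k=\lceil d/2\rceil=2$. Substituting these values into the formula of Theorem~\ref{thm:numberrealreps} yields
\[
2^{g-1}(2^{k-1}+1)\;=\;2^{2}\cdot(2^{1}+1)\;=\;12.
\]
Next, I would verify that we are \emph{not} in the ``one less'' exceptional case: that case requires $d\equiv\pm 3\pmod 8$, which fails for $d=4$. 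Hence all $12$ real even theta characteristics are non-vanishing, and each produces, via Theorem~\ref{hvthm}, an equivalence class of real symmetric determinantal representations of $f$; conversely Theorem~\ref{thm:numberrealreps} says these exhaust the real equivalence classes.

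The only substantive point to verify is that the topological invariant $k$ really does equal $2$ for a smooth Helton-Vinnikov quartic. This is immediate from the Helton-Vinnikov condition: for even $d$ it forces $d/2=2$ nested ovals, so $X(\R)$ has exactly $2$ connected components, matching the convention used in \cite{Vin} and recalled in the proof of Theorem~\ref{thm:numberrealreps}.

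As an alternative check (and the one alluded to by the reference to the combinatorics of bitangents in \cite{PSV}), one can recover the same count intrinsically from the $28$ bitangents: the $28$ odd theta characteristics $\epsilon={\bf a}+\Omega{\bf b}$ parametrize the $28$ complex bitangents, the $36$ non-vanishing even theta characteristics parametrize the $36$ azygetic systems of contact cubics and hence the $36$ inequivalent complex determinantal representations, and a system is real exactly when it is invariant under complex conjugation. Carrying out this combinatorial count, using the known real structure on the $28$ bitangents of a Helton-Vinnikov quartic (four real bitangents, with the remaining bitangents occurring in conjugate pairs), once again produces $12$. I expect the only delicate step in this alternative route to be identifying which azygetic triples of complex bitangents represent conjugation-invariant systems when the constituent bitangents themselves need not be real; the theta-characteristic counting in Theorem~\ref{thm:numberrealreps} sidesteps this combinatorial bookkeeping, which is why I take it as the primary route.
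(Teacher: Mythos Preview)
Your primary argument—plugging $d=4$, $g=3$, $k=2$ into Theorem~\ref{thm:numberrealreps} and noting $d\not\equiv\pm 3\pmod 8$—is correct and is exactly what the paper itself acknowledges in the opening sentence of its proof. The paper's purpose, however, is to give an \emph{alternative} argument that avoids the period-matrix and theta-function machinery and stays within the bitangent combinatorics of \cite{PSV}. Concretely, the paper fixes one real representation $M$; complex conjugation then acts on the induced labeling $b_{ij}$ ($1\le i<j\le 8$) of the $28$ bitangents by some $\pi\in S_8$, which for a Helton-Vinnikov quartic is a product of four disjoint transpositions (confirming your ``four real bitangents''). The remaining $35$ representations correspond to partitions $I\mid I^c$ of $\{1,\dots,8\}$ into two $4$-sets, and a system of contact cubics is conjugation-invariant precisely when $\pi$ fixes the partition; a direct enumeration gives $11$ such partitions, hence $12$ real systems in all. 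The step you flag as delicate—showing that each real \emph{system} contains an honestly real contact cubic, so that conjugation-invariance upgrades to a real matrix representative—is handled by passing through real contact conics via \cite[Prop.~6.6, Lemma~6.7]{PSV}. Your route is shorter but black-boxes Vinnikov's analytic results; the paper's route is algebraically self-contained and produces the explicit list \eqref{eq:realbifid} used in the remainder of the section.
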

 
 \begin{proof} This is a special case of Theorem~\ref{equivcount},
 however, we here give an alternative proof using the setup of \cite{PSV}.
   Let $M$ be a symmetric linear determinantal representation of $f$ and $\M$ the system of contact cubics 
 $\{u^T{\rm adj}(M)u\}\subset \C[x,y,z]_3$.  The representation $M$ is equivalent to its conjugate $\overline{M}$ if and only if the system $\M$ is real, i.e. invariant under conjugation.  The representation $M$ is equivalent to a real matrix if and only if $\M$ contains a real cubic. 
The matrix $M$ induces a labeling of the $28 = \binom{8}{2}$ bitangents, 
 $b_{ij}$, with $1 \leq i<j \leq 8$. The system $\M$ is real if and only if  conjugation acts on the bitangents via this labeling, that is, there exists $\pi \in S_8$ such that $\overline{b_{ij}} = b_{\pi(i)\pi(j)}$. Since $f$ is a Helton-Vinnikov polynomial, this permutation will be the product of four disjoint transpositions (see \cite[Table~1]{PSV}). 
 
 Suppose $\M$ is real, with permutation $\pi \in S_8$. The other $35$
   representations  (\ref{lmr1}) correspond to the $\binom{8}{4}/2$ partitions of $\{1,\hdots, 8\}$ into 
two sets of size 4. If $I | I^c$  is such a partition then the corresponding
 system of contact cubics contains $56$ products of three bitangents, namely
 $b_{ij}b_{ik}b_{i\ell}$ and $b_{im}b_{jm}b_{k\ell}$ where $i,j,k,l,m$ are distinct and $\{i,j,k,l\}=I\text{ or }I^c$.
  This system is real if and only if $\pi$ fixes the partition $I|I^c$.
 There are exactly $11$ such partitions: if $\pi = (12)(34)(56)(78)$, they~are
\begin{equation}\label{eq:realbifid}\begin{matrix}1234|5678,& 1256|3478, &1278|3456,& 1357|2468 ,&1358|2467, &1368|2457\\
1367|2458, &1457|2368, &1458|2367, & 1467|2358,& \text{ and } &1468|2357. \end{matrix}\end{equation}
Together with the system $\mathcal{M}$,  there are $12$ real systems of azygetic contact cubics. 

Next, we will show that each of these systems actually contains a real cubic. 
To do this, we use contact conics, as the product of a bitangent  with
a contact conic is a contact cubic. 
By \cite[Lemma 6.7]{PSV}, there exists a real bitangent $b \in \R[x,y,z]_1$ and a real system of contact conics $\mathcal{Q} \subset \C[x,y,z]_2$ such that their product
$\{b\cdot q\;:\; q \in \mathcal{Q}\}$ lies in the system $\M \subset \C[x,y,z]_3$. Furthermore, by 
\cite[Prop.~6.6]{PSV}, since $\V_{\R}(f)$ is nonempty, every real system of contact conics $\mathcal{Q}$ to $f$ contains a real conic $q$. The desired real contact cubic is the product $b\cdot q$.
\end{proof}

By constructing a suitable Cayley octad (see \cite[\S 3]{PSV}), the technique in the last paragraph of the above proof led us to the following result:
{\em 
There exists a smooth Helton-Vinnikov quartic $f \in \Q[x,y,z]_4$ 
that has $12$  inequivalent determinantal representations {\rm (\ref{lmr1})}
over the field $\Q$ of rational numbers.}

\begin{example}\label{thm:RealNonDef}
The special rational Helton-Vinnikov quartic we found is
\begin{align*} f(x,y,z) \;\; =& \;\;
93081 x^4 +53516  x^3 y -73684 x^2 y^2 +-31504 x y^3 +9216 y^4 \\ 
&-369150 x^2 z^2-159700 x y z^2 + 57600 y^2 z^2+90000 z^4.
\end{align*}
This polynomial satisfies
 $\,f(x,y,z) \,= {\rm det}(M)$ where
$$M\;=\begin{small}\;\begin{bmatrix}
50 x & -25 x &  - 26 x - 34 y-25 z & 9 x + 6 y + 15 z  \\
-25 x & 25 x & 27 x+ 18 y  -20 z  & - 9 x -6 y \\
- 26 x - 34 y-25 z  &   27 x  + 18 y-20 z& 108 x+72 y   &- 18 x -12 y  \\
 9 x + 6 y + 15 z & - 9 x -6 y &  - 18 x-12 y &  6 x+4 y 
\end{bmatrix}.\end{small}
$$
This representation is definite because the matrix $M$ is positive definite at the point $(1:0:0)$.
Hence $\mathcal{V}(f)$ is  a Helton-Vinnikov curve with this point in its
inner convex oval.  
Rational representatives for the other seven definite classes
are found at our website (\ref{eq:url}), along with representatives
for the four non-definite real classes. One of them is the matrix
\begin{equation}
\label{trichotomyfalse}
M_{1468} \; = \;
\begin{bmatrix}
25 x & 0 & - 32 x +12 y & -60 z \\
0 & 25 x & 10 z &  24 x+16 y  \\
- 32 x+ 12 y  & 10 z &  6 x+4 y  & 0 \\
-60 z &  24 x+16 y & 0 &  6 x+4 y 
                    \end{bmatrix}.
\end{equation}
We have ${\rm det}(M_{1468}) = 4 \cdot f(x,y,z)$, and this matrix is 
neither positive definite nor negative definite for any real values of $x,y,z $.
Any equivalent representation of a multiple of $f$ in the form
$\det({\rm Id}_4x+By+Cz)$ considered in Sections~\ref{sec:PolynomialEquations}~and~\ref{sec:theta}
cannot have all entries of $C$ real.  One such representation, for a suitable $U\in {\rm GL}_4(\C)$, is
\[U^TM_{1468}U \;=\; \begin{small}\begin{bmatrix}
x +\frac{64}{71} y& 0 & -\frac{23}{1349} \sqrt{26980} \;i\;z& \frac{-51}{1633} \sqrt{16330}\;z  \\
0 & x+ \frac{2}{3}y & -\frac{2}{19} \sqrt{570}\;z & \frac{4}{23} \sqrt{345}\;i\;z \\
-\frac{23}{1349}  \sqrt{26980} \;i\;z & -\frac{2}{19} \, \sqrt{570}\;z & x-\frac{4}{19} y & 0 \\
\frac{-51}{1633} \,\sqrt{16330} \;z & \frac{4}{23} \sqrt{345}\;i\;z  & 0 & x-\frac{18}{23}y
\end{bmatrix}\end{small}.\] 
\end{example}

\smallskip

The correspondence between bitangents and odd theta characteristics 
can be understood abstractly via the isomorphism
between the Jacobian of $X$ and the divisor class group ${\rm Cl}^0(X)$,   
 and  can be turned into an explicit formula for the
bitangents. Let $u=z/x$, $v=y/x$ and write $h(u,v)=f(1,v,u)$. Then a basis for the
$3$-dimensional complex vector space of
holomorphic $1$-forms on $X$ is given by
\[
(\omega_1,\omega_2,\omega_3)=\left(\frac{du}{\partial h/\partial v},
  \frac{vdu}{\partial h/\partial v},
  \frac{udu}{\partial h/\partial v}\right).
\]
Let $(\Omega_1|\Omega_2)$ be the period matrix of $f$ with respect to
$(\omega_1,\omega_2,\omega_3)$ and any symplectic basis of
$H_1(X,\Z)$. Then given an odd theta characteristic $\epsilon={\bf
  a}+\Omega{\bf b}$, the corresponding bitangent is defined by the
linear form
\begin{equation}\label{eq:thetabitangents}
b_\epsilon(x,y,z) =
\bigl(\nabla\theta[\epsilon](0)\bigr)^T\cdot\Omega_1^{-1}\cdot (x,y,z)^T,
\end{equation}
where $\nabla\theta[\epsilon]$ is the gradient of $\theta[\epsilon]$
in the three complex variables $z_1,z_2,z_3$. For the proof,
see Dolgachev \cite[Section 5.5.4]{Dol}. This holds independently of
the symplectic basis of $H_1(X,\Z)$, but a change of that
basis will permute the bitangents.

The formula (\ref{eq:thetabitangents}) can be evaluated using the
{\tt Maple} code described in Section~4. This allows us to compute the
$8 {\times} 8$-bitangent matrix $(b_{ij})$ of \cite[Eq.~3.4]{PSV}
directly from the Riemann period matrix $\Omega$ of the curve $X$, using a technique due to Riemann described in 
\cite[\S 2]{Gua}. In this manner, one computes the
symmetric determinantal representations (\ref{lmr1})
of the curve $X$ directly from the period matrix $\Omega$.
This computation seems to be a key ingredient in constructing
explicit three-phase solutions of the Kadomtsev-Petviashvili equation \cite{DFS},
and we hope  that the combinatorial tools developed here and in \cite{PSV}
will be useful for  integrable systems.

One of the earliest papers on algorithms for theta functions in genus three
was written by Arthur Cayley in 1897.
In \cite{Cay}  he gives a concrete bijection between 
the bitangents $b_{ij}$ of a plane quartic and the
odd theta characteristics, and also between the 
classes $I | I^c$ of determinantal representations and the even theta characteristics.
We here reproduce a relabeled version of the table in Cayley's article:
\smallskip
\[ \begin{tabular}{c||c|c|c|c|c|c|c|c}
$ 2{\bf b}\backslash2{\bf a} $& 000 & 100 & 010 & 110 & 001 & 101 & 011 &111\\
\hline\hline
000 & Ê$\emptyset$ & 1238 & 1267 & 1245 & \textbf{1468} & 1578 & 1356 & 1347\\
100 & \textbf{1234} & ÊÊ48 & 1258 & ÊÊ35 & \textbf{1457} & ÊÊ16 & 1378 & ÊÊ27\\
010 & \textbf{1256} & 1247 & ÊÊ57 & Ê46 & \textbf{1367} & 1345 & ÊÊ23 & ÊÊ18\\
110 & \textbf{1278} & ÊÊ37 & ÊÊ68 & 1236 & \textbf{1358} & ÊÊ25 & ÊÊ14 & 1567\\
001 & \textbf{1357} & 1346 & 1478 & 1568 & ÊÊ12 & ÊÊ38 & ÊÊ67 & ÊÊ45\\
101 & \textbf{1368} & ÊÊ26 & 1456 & ÊÊ17 & ÊÊ34 & 1248 & ÊÊ58 & 1235\\
011 & \textbf{1458} & 1678 & ÊÊ13 & ÊÊ28 & ÊÊ56 & ÊÊ47 & 1257 & 1246\\
111 & \textbf{1467} & ÊÊ15 & ÊÊ24 & 1348 & ÊÊ78 & 1237 & 1268 & ÊÊ36
\end{tabular}\]
\smallskip
Here a partition $I|I^c$ of $\{1,\hdots, 8\}$ is represented by the $4$-tuple $I$ which contains
the index $1$.  For instance, the $4$-tuple $1238$ corresponds to the even theta characteristic
$ \begin{bmatrix}100\\000\end{bmatrix} $. Each partition $I|I^c$ represents a Cremona transformation leading
to a new representation (\ref{lmr1}) as described in
\cite[\S 3]{PSV}. The twelve
$4$-tuples marked in bold face are the real equivalence classes, and
this gives a bijection between the lists 
in (\ref{eq:realthetachar}) and in (\ref{eq:realbifid}). 
Likewise, the pairs $ij$ in Cayley's table represent bitangents $b_{ij}$
and the corresponding odd theta characteristics. For instance, the odd characteristic
$ \begin{bmatrix}100\\111\end{bmatrix} $ represents the bitangent
$b_{15}$. In this manner, we can parametrize the 28
bitangents of all plane quartics explicitly
with odd theta functions.

\smallskip

Experts in moduli of curves will be quick to point out that 
this parametrization should extend from smooth curves to all
stable curves. This is indeed the case. For instance, four distinct lines
form a stable Helton-Vinnikov quartic such~as
$$ f(x,y,z) \quad = \quad x y z (x+y+z). $$
\begin{figure}
 \includegraphics[width=6.0cm]{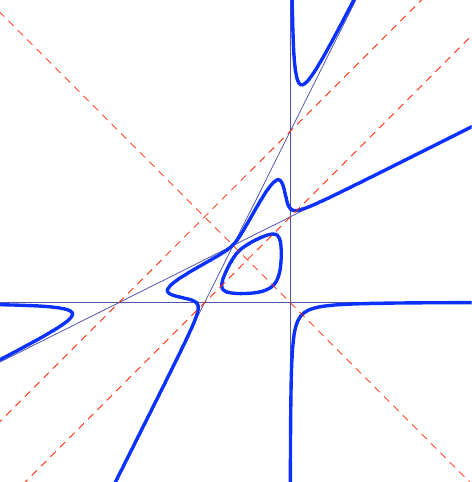} \quad
\vskip -0.2cm
\caption{Degeneration of a Helton-Vinnikov quartic into four lines.}
\label{fig:4lines}
\end{figure}
The bitangent matrix $(b_{ij})$ of this reducible curve has $7$ distinct non-zero entries:
$$
\begin{small}
\begin{bmatrix}
               0 & z & y & y + z & x & x + z & x + y & x {+} y {+} z \\
               z & 0 & y + z & y & x + z & x & x {+} y {+} z & x + y \\
               y & y + z & 0 & z & x + y & x {+} y {+} z & x & x + z \\
               y + z & y & z & 0 & x {+} y {+} z & x + y & x + z & x \\
               x & x + z & x + y & x {+} y {+} z & 0 & z & y & y + z \\
               x + z & x & x {+} y {+} z & x + y & z & 0 & y + z & y \\
               x + y & x {+} y {+} z & x & x + z & y & y + z & 0 & z \\
               x {+} y {+} z & x + y & x + z & x & y + z & y & z & 0 \\
\end{bmatrix}.
\end{small}
$$
All principal $4 {\times} 4$-minors of this $8 {\times} 8$-matrix
are multiples of $f(x,y,z)$, most of them non-zero. They are
all in the same equivalence class, which is real but not definite.
The entries in the bitangent matrix indicate a partition of
the $28$ odd theta characteristics into seven groups of four.
For instance, the antidiagonal entry $x+y+z$
corresponds to the four entries $18, 27, 36$ and $45$ in Cayley's table,
and hence to the four odd theta characteristics
$\, \begin{bmatrix}010\\111\end{bmatrix} $,
$\, \begin{bmatrix}100\\111\end{bmatrix} $,
$\, \begin{bmatrix}111\\111\end{bmatrix} $ and
$\, \begin{bmatrix}001\\111\end{bmatrix} $.

If we consider a family of smooth quartics that degenerates to 
the reducible quartic $f(x,y,z)$, then its bitangent matrix will degenerate
to the above $8 {\times} 8$-matrix, and hence the $28$ distinct
bitangents of the smooth curve bunch up in seven clusters of four.
This degeneration is visualized in Figure~\ref{fig:4lines}.
Among the seven limit bitangents are the three lines spanned by
pairs of intersection points.

Algebraically, such a degenerating family can be described 
as a curve over a field with a valuation, such as the
field of {\em real Puiseux series} $\R\{\!\{ \epsilon \}\!\}$.
The notions of spectrahedra
and Helton-Vinnikov curves makes perfect sense over the real closed field
$\R\{\!\{ \epsilon \}\!\}$. This has been investigated from the perspective
of tropical geometry by David Speyer, who proved in \cite{Spe} that 
tropicalized Helton-Vinnikov curves are precisely {\em honeycomb curves}.
We believe that the tropicalization in \cite{Spe} offers yet another approach to constructing
linear determinantal representations (\ref{lmr1}), in addition to the three methods presented here,
and we hope to return to this topic.

\bigskip 

\noindent
 {\bf Acknowledgments.} We wish to thank
  Charles Chen,   Bernard Deconinck,
  Didier Henrion, Chris Swierczewski and 
  Victor Vinnikov for discussions and computational contributions
  that were very helpful to us in the preparation of this article.

{\linespread{1}

}


\begin{thebibliography}{10}
\providecommand{\url}[1]{\texttt{#1}}
\providecommand{\urlprefix}{URL }

\bibitem{AD}
M.~Artebani and I.~Dolgachev:
The Hesse pencil of a plane cubic curve.
{\em Enseign.~Math.} \textbf{55}, 235--273,~2009.


\bibitem{BV}
J.~A. Ball and V.~Vinnikov:
\newblock Zero-pole interpolation for matrix meromorphic functions on a compact
  {R}iemann surface and a matrix {F}ay trisecant identity.
\newblock \emph{Amer. J. Math.}, \textbf{121~(4)}, 841--888, 1999.

\bibitem{Bea}
A.~Beauville:
\newblock Determinantal hypersurfaces.
\newblock \emph{Michigan Math. Journal}, \textbf{48}, 39--64, 2000.

\bibitem{bertini}
D.~Bates, J. Hauenstein, A. Sommese, and C.  Wampler:
{\sc Bertini}: Software for Numerical Algebraic Geometry,
{\tt http://www.nd.edu/$\sim$sommese/bertini/}, (2010).

\bibitem{Cay}
A.~Cayley: Algorithm for the characteristics of the triple
  $\vartheta$-functions.
  {\em   Journal f\"ur die reine und angewandte Mathematik},
   {\bf 87}, 165-169, 1879.

\bibitem{singular}
W.~Decker, G.-M.~Greuel, G.~Pfister, and H.~Sch{\"o}nemann:
{\sc Singular}: A computer algebra system for polynomial computations,
 {\tt www.singular.uni-kl.de} (2010).

\bibitem{DeEtAl}
B.~Deconinck, M.~Heil, A.~Bobenko, M.~van Hoeij, and M.~Schmies:
\newblock Computing {R}iemann theta functions.
\newblock \emph{Mathematics of Computation}, \textbf{73~(247)}, 1417--1442, 2004.

\bibitem{DP}
B.~Deconinck and M.~S. Patterson:
\newblock Computing the {A}bel map.
\newblock \emph{Physica D}, \textbf{237~(24)}, 3214--3232, 2008.

\bibitem{DvH}
B.~Deconinck and M.~van Hoeij:
\newblock Computing {R}iemann matrices of algebraic curves.
\newblock \emph{Physica D}, \textbf{152/153}, 28--46, 2001.

\bibitem{Dix}
A.C.~Dixon:
\newblock {Note on the reduction of a ternary quantic to a symmetrical
  determinant}.
\newblock \emph{Cambr. Proc.} \textbf{11}, 350--351, 1902.

\bibitem{Dol}
I.~Dolgachev:
\newblock \emph{Classical Algebraic Geometry: A Modern View},
\newblock Cambridge University Press, 2012.

\bibitem{DFS}
B.~Dubrovin, R.~Flickinger and H.~Segur:
Three-phase solutions of the Kadomtsev-Petviashvili equation.
{\em Stud. Appl. Math.} {\bf 99} (1997) 137--203.

\bibitem{Gre}
W.~Greub:
\newblock \emph{Linear Algebra}.
\newblock Springer-Verlag, New York, 4th edn., 1975.
\newblock Graduate Texts in Math., No 23.



\bibitem{GroHa}
B.~H. Gross and J.~Harris:
\newblock Real algebraic curves.
\newblock \emph{Ann. Sci. \'Ecole Norm. Sup. (4)}, \textbf{14~(2)}, 157--182,
  1981.

\bibitem{Gua}
J.~Guardia: On the Torelli problem and Jacobian Nullwerte in genus three,Ê
Ê \emph{Michigan Mathematical Journal}, \textbf{60}, 51--65, 2011.

\bibitem{HV}
J.~W. Helton and V.~Vinnikov:
\newblock Linear matrix inequality representation of sets.
\newblock \emph{Comm. Pure Appl. Math.}, \textbf{60~(5)}, 654--674, 2007.

\bibitem{Hul} K.~Hulek: {\em Elementary Algebraic Geometry},
Student Mathematical Library, Vol.~20, American Mathematical Society, Providence, RI, 2003.

\bibitem{LPR} A.~Lewis, P.~Parrilo and M.~Ramana: The Lax conjecture is true.
\emph{Proceedings Amer.~Math.~Soc.}, \textbf{133}, 2495--2499, 2005.

\bibitem{MB}
T.~Meyer-Brandis.
\newblock Ber{\"u}hrungssysteme und symmetrische Darstellungen ebener Kurven,
  1998.
\newblock Diplomarbeit, Universit{\"a}t Mainz, 
written under the supervision of
 D.~van Straten, posted at
\url{http://enriques.mathematik.uni-mainz.de/straten/diploms}

\bibitem{Mum}
D.~Mumford:
\newblock \emph{Tata Lectures on Theta. {I}}.
\newblock Modern Birkh\"auser Classics. Birkh\"auser, Boston, MA,
  2007.   Reprint of the 1983 edition.

\bibitem{PSV}
D.~Plaumann, B.~Sturmfels, and C.~Vinzant:
\newblock Quartic curves and their bitangents, Ê \emph{Journal of Symbolic Computation}, \textbf{46}, 712-733, 2011.Ê
%
%
\bibitem{Spe} D.~Speyer:
Horn's problem, Vinnikov curves, and the hive cone.
{\em Duke Math. J.} {\bf 127} (2005), no. 3, 395--427.

\bibitem{Vin89}
V.~Vinnikov:
\newblock Complete description of determinantal representations of smooth
  irreducible curves.
\newblock \emph{Linear Algebra Appl.}, \textbf{125}, 103--140, 1989.

\bibitem{Vin}
V.~Vinnikov:
\newblock Selfadjoint determinantal representations of real plane curves.
\newblock \emph{Mathematische Annalen}, \textbf{296~(3)}, 453--479, 1993.

\bigskip

\end{thebibliography}
\end{document}